\theoremstyle{plain}
\newtheorem{Lemma}{Lemma}
\newtheorem{Theorem}[Lemma]{Theorem}
\newtheorem{Proposition}[Lemma]{Proposition}
\newtheorem{Corollary}[Lemma]{Corollary}
\newtheoremstyle{case}{}{}{}{}{}{:}{ }{}
\theoremstyle{case}
\newtheoremstyle{cese}{}{}{}{}{}{:}{ }{}
\theoremstyle{cese}
\newtheoremstyle{case}{}{}{}{}{}{:}{ }{}
\theoremstyle{case}
\title{Character sums to prime power moduli evaluated at binary quadratic forms}
\subjclass[2010]{11L40,11T24,11L07,11E08}
\keywords{character sums, prime power moduli, binary quadratic forms, $p$-adic exponential sums}
\author{Stephan Baier}
\address{Stephan Baier,
Ramakrishna Mission Vivekananda Educational and Research Institute, Department of Mathematics, G. T. Road, PO Belur Math, Howrah, West Bengal 711202, India}
\email{stephanbaier2017@gmail.com}
\author{Aishik Chattopadhyay}
\address{Aishik Chattopadhyay,
Ramakrishna Mission Vivekananda Educational and Research Institute, Department of Mathematics, G. T. Road, PO Belur Math, Howrah, West Bengal 711202, India}
\email{aishik.ch@gmail.com}
\begin{document}
\maketitle
\begin{abstract} We establish estimates for short character sums to prime power moduli evaluated at binary quadratic forms. This complements estimates established by Heath-Brown for such character sums to squarefree moduli.  Our approach uses $p$-adic analysis. More precisely, we use tools from the $p$-adic theory of exponential sums, as initiated by Mili\'cevi\'c.  
\end{abstract}

\tableofcontents

\section{Introduction and main results}
\subsection{General notations}
Throughout this article, we fix a prime $p>3$ and an arbitrarily small $\varepsilon>0$. For $x\in \mathbb{R}$, we write $e(x):=e^{2\pi i x}$. If $\Omega:\mathbb{R}\rightarrow \mathbb{C}$ is a Schwartz class function, we denote by $\hat{\Omega}$ its Fourier transform, defined as 
$$
\hat\Omega(y):=\int\limits_{\mathbb{R}} \Omega(x)e(-xy){\rm d}x.
$$
By $\log_p(1+x)$, we denote the $p$-adic logarithm of $1+x$, defined for all $x\in p\mathbb{Z}_p$ by the $p$-adic power series expansion
$$
\log_p(1+x)=x-\frac{x^2}{2}+\frac{x^3}{3}-\cdots=\sum\limits_{n=1}^{\infty} (-1)^{n+1}\cdot \frac{x^n}{n}.
$$
We note that the $p$-adic derivative of $\log_p$ satisfies
$$
\log_p' y =\frac{1}{y} \quad \mbox{for} \quad y\in 1+p\mathbb{Z}_p.
$$
\subsection{Review of previous works}
In 2016, Heath-Brown \cite[Theorem 3]{HB} proved the following bound on short character sums evaluated at binary quadratic forms, elaborating on an earlier result by Chang \cite{Cha}.   
\begin{Theorem} \label{HeathBrown}
Let $\varepsilon > 0$ and an integer $r\ge 3$ be given, and suppose that $C\subset \mathbb{R}^2$ is a convex set contained in a disc $\{{\bf x} \in \mathbb{R}^2:
 ||{\bf x}-{\bf x}_0||_2 \le R\}$, $||.||_2$ denoting the Euclidean norm. Let $q \ge 2$ be odd and squarefree, and let $\chi$ be a primitive character to modulus $q$. Then if $Q(x,y)$ is a binary integral quadratic form with $(\det(Q), q) = 1$, we have
\begin{equation} \label{HBsbound}
\sum\limits_{(x,y)\in C}  \chi(Q(x, y)) \ll_{\varepsilon,r} R^{2-1/r}q^{(r+2)/(4r^2)+\varepsilon}
\end{equation}
for $q^{1/4+1/(2r)} \le R \le q^{5/12+1/(2r)}$.
\end{Theorem}

He applied this bound to prove the following result on small solutions of ternary quadratic congruences \cite[Theorem 1]{HB}. 

\begin{Theorem} \label{HeathBrown2} Let $q$ be an odd squarefree integer, and let $\varepsilon>0$ be given. For a quadratic form $Q\in \mathbb{Z}[X_1,X_2,X_3]$ set 
$$
 m(Q;q) := \min\{||x||_2 : x \in \mathbb{Z}^3\setminus \{0,0,0\},\  Q(x) \equiv 0 \bmod{q}\},
$$  
where $||.||_2$ denotes the Euclidean norm. Then for every form $Q\in \mathbb{Z}[X_1,X_2,X_3]$ with $(\det(Q),q) = 1$, the bound 
$$ 
m(Q,q)\ll q^{5/8+\varepsilon}
$$
holds, where the implied constant depends only on $\varepsilon$ and not on $q$ or $Q$. 
\end{Theorem}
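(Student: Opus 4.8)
The plan is to produce a nonzero solution by a counting argument: I will show that for a suitable radius $R\asymp q^{5/8+\varepsilon}$ the number of lattice points $x$ in a box adapted to $Q$, weighted by a smooth cutoff, with $Q(x)\equiv 0\bmod q$ exceeds the contribution of the trivial point $x=0$, so that a genuine small solution must exist. First I would normalise $Q$. Since $q$ is odd and $(\det Q,q)=1$, the form is nondegenerate modulo every prime dividing $q$, so after a linear change of variables diagonalising $Q$ modulo $q$ (a standard reduction for such forms, the resulting change of coordinates being a controlled technicality that does not affect the exponent) I may assume $Q\equiv a_1X_1^2+a_2X_2^2+a_3X_3^2$ with each $a_i$ coprime to $q$. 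Writing $B(x_1,x_2):=a_1x_1^2+a_2x_2^2$ and letting $\overline{a_3}$ denote the inverse of $a_3$ modulo $q$, the congruence becomes $a_3x_3^2\equiv -B(x_1,x_2)\bmod q$, a binary quadratic form on the right.

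Next I would detect the solvability of this equation in $x_3$ by the quadratic character. For $q$ squarefree the number of residues $t\bmod q$ with $t^2\equiv -\overline{a_3}\,B(x_1,x_2)$ equals $\sum_{\chi}\chi(-\overline{a_3}B(x_1,x_2))$, the sum running over the real characters modulo the divisors of $q$ (valid when $(B(x_1,x_2),q)=1$). Inserting this and applying Poisson summation to the short $x_3$-range against a smooth weight $W_3$ splits the count into a principal frequency and non-principal frequencies. The principal frequency produces the main term $\asymp R_1^2R_3/q$ from the trivial character, together with the sums $\sum_{x_1,x_2}W_1W_2\,\chi(B(x_1,x_2))$ over the nontrivial real characters $\chi$. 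Each of these is exactly a short character sum to a squarefree modulus evaluated at the binary form $B$, so Theorem \ref{HeathBrown} applies and bounds it by $R_1^{2-1/r}q^{(r+2)/(4r^2)+\varepsilon}$, provided the two-dimensional radius $R_1$ lies in the admissible window $q^{1/4+1/(2r)}\le R_1\le q^{5/12+1/(2r)}$.

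The main obstacle is the non-principal frequencies, which encode the discrepancy between the short interval and the full residue system for $x_3$ and which, if bounded trivially, swamp the main term. Here I would open the square-root count into Sali\'e sums; using the classical evaluation of Sali\'e and Gauss sums available because $q$ is squarefree and odd, the inner sum over $(x_1,x_2)$ factors into one-dimensional quadratic Gauss sums, coupled to sums over the Sali\'e variable and over the dual variable $h$ whose oscillation must be exploited. Establishing square-root cancellation in this coupled multidimensional exponential sum, gaining a genuine power of $q$ over the trivial bound, is the crux of the argument, and it is the analogue of this step that, for prime power moduli, forces the $p$-adic stationary phase machinery referred to in the abstract.

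Finally I would optimise. Choosing the box thin in the two form variables ($R_1$ near the top of its admissible window) and long in $x_3$ ($R_3\asymp q^{5/8+\varepsilon}$, so that $\|x\|_2\asymp R_3$), and selecting the parameter $r\ge 3$ to balance the character-sum error and the exponential-sum error against the main term $R_1^2R_3/q$, a direct computation yields that the main term dominates once $R_3\gg q^{5/8+\varepsilon}$. For such $R$ the weighted count exceeds the $O(1)$ contribution of $x=0$ (the sparse points with $(B(x_1,x_2),q)>1$ being absorbed into the error), so a nonzero integral $x$ with $Q(x)\equiv0\bmod q$ and $\|x\|_2\ll q^{5/8+\varepsilon}$ exists, which is the assertion.
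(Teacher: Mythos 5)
Your proposal should first be measured against the right benchmark: the paper you are reading does not prove this statement at all — it quotes it as Theorem 1 of Heath-Brown \cite{HB} — so the comparison is with Heath-Brown's original argument. That argument is not a first-moment counting argument of the kind you outline. Its engine is geometric: for each projective zero $y$ of $Q$ modulo $q$ (there are roughly $q$ of them, assembled by the Chinese Remainder Theorem from the conic points modulo each prime $p\mid q$), one forms the lattice $\Lambda_y=\{x\in\mathbb{Z}^3: x\equiv ty\bmod{q},\ t\in\mathbb{Z}\}$, which has determinant $q^2$ and consists \emph{entirely} of solutions, since $Q(ty)\equiv t^2Q(y)\equiv 0\bmod q$. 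Heath-Brown deduces the exponent $5/8$ by playing an averaged geometry-of-numbers analysis of this family of lattices against counting information supplied by the character-sum bound (Theorem \ref{HeathBrown}) applied to a binary form obtained by completing the square in one variable. The paper itself signals this when it describes Heath-Brown's method as combining ``tools from the geometry of numbers and algebraic geometry.'' This extra mechanism is not decorative: a direct count of lattice points on the quadric $Q\equiv 0\bmod q$ in a box, even granting square-root cancellation in every complete exponential sum that arises, stalls at the Minkowski exponent $q^{2/3}$, which is exactly why something beyond box-counting is needed to reach $5/8$.

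This is where your proposal has its genuine gap, and it sits precisely at the step you yourself call ``the crux.'' You give no argument whatsoever for the non-principal frequencies, and the obvious accounting indicates that the cancellation you hope for is not there. After Poisson summation in $x_3$, the $h\neq 0$ frequencies contribute about
$$
\frac{R_3}{q}\sum_{0<|h|\ll q/R_3}\bigl|D(h)\bigr|,\qquad
D(h)=\sum_{x_1,x_2}\ \sum_{\substack{y\bmod q\\ y^2\equiv -\overline{a_3}B(x_1,x_2)\bmod q}} e\!\left(\frac{hy}{q}\right).
$$
Detecting the root condition with additive characters and applying Poisson in $(x_1,x_2)$, the zero dual frequency of $D(h)$ evaluates to roughly $R_1^2q^{-1/2}$ times a Sali\'e sum of modulus about $\sqrt{q}$; this piece is of size $\asymp R_1^2$ for a positive proportion of $h$, and it does not couple to $(x_1,x_2)$ at all, so no square-root saving in the $(x_1,x_2)$-aspect is available uniformly in $h$. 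Consequently the $h\neq 0$ terms contribute $\gtrsim (R_3/q)\cdot(q/R_3)\cdot R_1^2=R_1^2$, which dominates your main term $\delta R_1^2R_3/q$ for \emph{every} $R_3<q$; the only escape would be cancellation in the $h$-aspect of the Sali\'e evaluations, but their phases are additive characters at the square roots themselves, so exploiting that cancellation amounts to controlling square roots in short intervals — i.e., it re-encodes the original problem. Two further points. First, your opening reduction is also not sound as stated: diagonalizing $Q$ modulo $q$ requires inverting coefficients mod $q$, so the substitution matrix has entries of size $q$ and destroys the correspondence between boxes and short vectors; the legitimate move (Heath-Brown's) is to complete the square in $x_3$ alone, after arranging $(a_{33},q)=1$ by a bounded unimodular change of variables, so that $y=a_{13}x_1+a_{23}x_2+a_{33}x_3$ is a function of $x$ rather than a new coordinate. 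Second, your use of Theorem \ref{HeathBrown} for the principal frequency is the one part that does work, and it is indeed the same ingredient Heath-Brown uses — but without the lattice-averaging framework around it, the argument cannot close.
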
 

One may conjecture that the above bound holds with an exponent of $1/2$ in place of $5/8$. 
In \cite{BaCh1} and \cite{BaCh2}, the authors of the present paper employed Theorem \ref{HeathBrown} to improve this bound for almost all ternary quadratic forms in certain classes of diagonal ternary quadratic forms, passing the $q^{1/2}$ barrier. 

Heath-Brown's bound \eqref{HBsbound} applies to squarefree integers $q$. In this paper,  we derive bounds that are stronger than \eqref{HBsbound} for prime power moduli, thereby filling a gap in the literature. However, we have not been able to utilize these new estimates to make progress on small solutions of quadratic congruences to prime power moduli.  

Famously, Postnikov \cite{Pos} observed that Dirichlet characters modulo prime powers can be expressed as additive characters. Our approach starts with this conversion. To treat the resulting sums of additive characters evaluated at quadratic forms, we use $p$-adic analysis. More precisely, we apply tools from the $p$-adic theory of exponential sums, as initiated by Mili\'cevi\'c \cite{Mil}, such as $p$-adic stationary phase and $p$-adic Weyl differencing. However, Mili\'cevic's theory of $p$-adic {\it exponent pairs} is not directly applicable in this setting of quadratic forms, which forces us to carry out the relevant steps explicitly, resulting in rather complicated calculations (in particular, with regard to multiplicities of zeros of rational functions). To facilitate applications of the Poisson summation formula, we consider the problem in a smoothed setup. Our method, which is restricted to prime power moduli, is fundamentally different from Heath-Brown's, which combines a variety of tools from the geometry of numbers and algebraic geometry, such as the Riemann hypothesis for curves. Parts of Heath-Brown's work \cite{HB} are inspired by Chang's treatment in \cite{Cha}, other parts are reminiscent of Burgess's works \cite{Bur1} and \cite{Bur2} on short character sums. \\ \\
{\bf Acknowledgements.} The authors would like to thank the Ramakrishna Mission Vivekananda Educational and Research Institute for an excellent work environment. The research of the second-named author was supported by a CSIR Ph.D fellowship under file number 09/0934(13170)/2022-EMR-I.\\ \\
{\bf Data availability statement.} No data were generated or analyzed in this study.

\subsection{Our main results}
The following results depend on a prime $p$ and Schwartz class functions $\Psi$ and $\Phi$.
Throughout the sequel, we follow the convention that all implied $O$-constants are bounded by $p^KL_{\Psi,\Phi}$, where $K$ is some absolute positive constant and $L_{\Psi,\Phi}$ is a positive constant depending only $\Psi$ and $\Phi$.   
We do not calculate these constants in the present paper. Our first result provides the following conditional bounds.
\begin{Theorem}  \label{mainresult}
Let $p>3$ be a prime and $Q\in \mathbb{Z}_p[X,Y]$ be a quadratic form given as 
$$
Q(x,y)=ax^2+2bxy+cy^2.
$$
Denote by $\det(Q)$ the determinant of $Q$, defined as $\det(Q):=ac-b^2$. Assume that $(c\det(Q),p)=1$. Let $\Psi,\Phi:\mathbb{R}\rightarrow \mathbb{R}_{\ge 0}$ be Schwartz class functions with compact support in $[-1,1]$. Let $q:=p^n$, where $n$ is a positive integer satisfying $n\ge n_0$ for a sufficiently large absolute constant $n_0>0$. Assume that $1\le M,N<p^n$. Let $\chi$ be a primitive character modulo $q$.  Suppose the bounds \eqref{TT1esti} and \eqref{TT2esti} hold for $j_1,j_2\in \mathbb{N}$. Then we have the estimates
\begin{equation} \label{mainestimates} 
\begin{split}
& \sum_{x\in \mathbb{Z}} \sum_{y\in\mathbb{Z}} \Psi\left(\frac{x-A}{M}\right) \Phi\left(\frac{y-B}{N}\right) \chi(Q(x,y))\\ \ll &
\begin{cases} \left(MN^{1/2}+NM^{1/2}\right)q^{(j_1-1)/(2(2j_1-1))} \quad \mbox{if } q^{(j_1-1)/(2j_1-1)}\le M,N\le q^{(3j_1-2)/(2(2j_1-1))}\\ \\ \left(MN^{(7j_2-4)/(2(5j_2-3))}+NM^{(7j_2-4)/(2(5j_2-3))}\right)q^{(j_2-1)/(2(5j_2-3))} \quad \mbox{if } q^{(j_2-1)/(3j_2-2)}\le M,N\le q^{2/3}.
\end{cases}
\end{split} 
\end{equation}
Consequently, if $M=N$, we have 
\begin{equation} \label{maincor} 
\begin{split}
& \sum_{x\in \mathbb{Z}} \sum_{y\in\mathbb{Z}} \Psi\left(\frac{x-A}{N}\right) \Phi\left(\frac{y-B}{N}\right) \chi(Q(x,y))\\ \ll &
\begin{cases} N^{3/2}q^{(j_1-1)/(2(2j_1-1))} \quad \mbox{if } q^{(j_1-1)/(2j_1-1)}\le N\le q^{(3j_1-2)/(2(2j_1-1))}\\ \\ N^{(17j_2-10)/(2(5j_2-3))}q^{(j_2-1)/(2(5j_2-3))} \quad \mbox{if } q^{(j_2-1)/(3j_2-2)}\le N\le q^{2/3}.
\end{cases} 
\end{split}
\end{equation}
\end{Theorem}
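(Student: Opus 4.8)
The plan is to convert the smooth two-dimensional character sum into complete $p$-adic exponential sums by Poisson summation, to control those sums through the assumed bounds \eqref{TT1esti} and \eqref{TT2esti}, and finally to balance the resulting estimate against the lengths $M$ and $N$; the corollary \eqref{maincor} is then a one-line specialisation. Since $(c,p)=1$, completing the square gives
\[
Q(x,y)=c\Big(\big(y+\tfrac{b}{c}x\big)^2+\tfrac{\det(Q)}{c^2}\,x^2\Big),
\]
so that $\chi(Q(x,y))=\chi(c)\,\chi\big((y+\tfrac{b}{c}x)^2+\tfrac{\det(Q)}{c^2}x^2\big)$. Because $(\det(Q),p)=1$, the diagonal form $z^2+\det(Q)x^2$ is nondegenerate modulo $p$, and its arithmetic splits according to whether $-\det(Q)$ is a square mod $p$; this dichotomy ultimately underlies the case distinction in \eqref{mainestimates}. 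Absorbing the $p$-adic shift $y\mapsto y+(b/c)x$ into the centre of $\Phi$ reduces matters, for each fixed $x$, to one-variable sums of the shape $\sum_{y}\Phi((y-B')/N)\,\chi(y^2+\lambda)$ with $\lambda=(\det(Q)/c^2)x^2$, which is a unit away from the thin locus $p\mid x$.

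Next I would apply Poisson summation modulo $q=p^n$ to the inner sum, turning it into
\[
\frac{N}{q}\sum_{k}\hat\Phi\Big(\frac{Nk}{q}\Big)\,C_\lambda(k),\qquad C_\lambda(k):=\sum_{y\bmod q}\chi(y^2+\lambda)\,e(ky/q),
\]
where the rapid decay of $\hat\Phi$ confines $k$ to $|k|\ll q^{1+\varepsilon}/N$. The sums $C_\lambda(k)$ are exactly the $p$-adic exponential sums evaluated by Mili\'cevi\'c's stationary-phase method: representing $\chi$ through $\log_p$, so that $\chi(y^2+\lambda)\,e(ky/q)$ becomes, up to a bounded tame factor, $e((t\log_p(y^2+\lambda)+ky)/q)$, the phase is stationary where its $p$-adic derivative vanishes, i.e.\ at the roots of $ky^2+2ty+k\lambda\equiv0$; the number and $p$-adic multiplicity of these roots dictate the size of $C_\lambda(k)$. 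This is precisely the information packaged in the hypotheses, with \eqref{TT1esti} governing the regime that produces the $j_1$-exponent and \eqref{TT2esti} the regime that produces the $j_2$-exponent.

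Reinserting these bounds and summing over $x$ (of length $\sim M$) and over the localised range of $k$, I would then carry out the final optimisation. The two cases in \eqref{mainestimates} appear because the completed sums are controlled in two overlapping ranges of the dual length $q/N$, each valid on exactly the displayed interval for $M,N$; here $j_1$ and $j_2$ play the role of integer amplification parameters over which one optimises, as in Burgess's method, the successive gains coming from iterating the Poisson duality. The symmetric shape $MN^{1/2}+NM^{1/2}$, and its analogue in the second case, reflects that after diagonalisation the two variables enter with unit leading coefficients and may be treated interchangeably, so one retains the sum of both contributions. Setting $M=N$ yields \eqref{maincor} at once: in the first case $MN^{1/2}+NM^{1/2}=2N^{3/2}$, and in the second case the exponent collapses to $1+\tfrac{7j_2-4}{2(5j_2-3)}=\tfrac{17j_2-10}{2(5j_2-3)}$.

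I expect the principal difficulties to be twofold. First, the clean passage from the smooth sum to the completed sums $C_\lambda(k)$ must correctly handle the degenerate locus $p\mid x$, where $\lambda$ ceases to be a unit and the two stationary points of the phase coalesce, as well as the non-integral shift $y\mapsto y+(b/c)x$; these are exactly the places where the generic stationary-phase picture behind \eqref{TT1esti} and \eqref{TT2esti} breaks down, so they must be isolated and estimated separately. Second, the bookkeeping in the final balance is delicate: one must match the contribution of the completed $k$-sum against the length of the $x$-sum so that the optimisation over $j_1$ (respectively $j_2$) reproduces precisely the exponents $(j_1-1)/(2(2j_1-1))$ and $(j_2-1)/(2(5j_2-3))$ together with the stated admissible ranges for $M$ and $N$.
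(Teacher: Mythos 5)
There is a genuine gap: your plan omits the $p$-adic Weyl differencing that is the engine of the paper's proof, and as a consequence it both misreads the hypotheses and cannot reach the stated ranges. In the paper, after the diagonalisation and the splitting into the subsums with $(y,p)=1$ and $p\mid y$ (which is where the symmetric shape $MN^{1/2}+NM^{1/2}$ really comes from), the inner sum is first subjected to one or two $p$-adic Weyl shifts $w\mapsto w+p^{k_i}h_i$ (Proposition \ref{Weyl shift}); only \emph{then} is Poisson summation applied, to the differenced sums $T(l_1,g_1)$ and $T(l_1,l_2,g_1,g_2)$, whose phases live modulo the \emph{reduced} moduli $p^{s_1}$, $s_1=n-(k_1+l_1)$, and $p^{s_2}$, $s_2=n-(k_1+k_2+l_1+l_2)$. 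The assumed bounds \eqref{TT1esti} and \eqref{TT2esti} are statements about exactly these differenced-and-completed sums (via Cochrane--Liu--Zheng, with $j_i-1$ bounding the multiplicities of critical points of the resulting rational phases); they are \emph{not} bounds on the direct complete sums $C_\lambda(k)=\sum_{y\bmod q}\chi(y^2+\lambda)e(ky/q)$ that your route produces, so your argument cannot actually invoke the hypotheses as stated. Moreover, the two cases in \eqref{mainestimates} correspond to one Weyl shift versus two Weyl shifts (and the final choice of $H_1$, resp.\ $H_1,H_2$, balancing the terms), not to a dichotomy on whether $-\det(Q)$ is a square modulo $p$, and $j_1,j_2$ are not Burgess-type amplification parameters one optimises over.

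The omission is not cosmetic. Applying Poisson directly modulo $q$ and using square-root cancellation for the complete sums (which is the best one can hope for) gives the inner $y$-sum a bound of order $q^{1/2}$, hence $\ll Nq^{1/2}$ for the full sum when $M=N$; this is weaker than the claimed $N^{3/2}q^{1/5}$ throughout $N\le q^{3/5}$ and says nothing nontrivial near the lower endpoints $N\sim q^{2/5}$ or $N\sim q^{3/10}$ of the theorem's ranges. The Weyl shifts are precisely what lowers the effective modulus from $p^n$ to $p^{s_i}$ before completion, which is the $p$-adic van der Corput mechanism that lets the method go below $N\sim q^{1/2}$. Your closing computation for the $M=N$ specialisation (the collapse of the exponent to $(17j_2-10)/(2(5j_2-3))$) is correct, but the core of the argument --- differencing first, then Poisson, then the Cochrane--Liu--Zheng input, then balancing $H_1,H_2$ --- is missing and cannot be replaced by the direct Poisson route you describe.
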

Let us compare the two estimates in \eqref{maincor}. We calculate that 
$$
N^{(17j_2-10)/(2(5j_2-3))}q^{(j_2-1)/(2(5j_2-3))} \le N^{3/2}q^{(j_1-1)/(2(2j_1-1))} 
$$
iff
$$
N\le q^{(3j_1j_2-j_1-4j_2+2)/((2j_1-1)(2j_2-1))}.
$$
As proved in section \ref{j1j2}, the estimates \eqref{TT1esti} and \eqref{TT2esti} hold for $j_1=3$ and $j_2=4$. In this case, we have 
$$
\frac{j_1-1}{2(2j_1-1)}= \frac{1}{5}, \quad \frac{7j_2-4}{2(5j_2-3)}=\frac{12}{17}, \quad \frac{j_2-1}{2(5j_2-3)}=\frac{3}{34},
$$
$$
\frac{j_1-1}{2j_1-1}=\frac{2}{5}, \quad \frac{3j_1-2}{2(2j_1-1)}=\frac{7}{10}, \quad \frac{j_2-1}{3j_2-2}=\frac{3}{10}, \quad \frac{3j_1j_2-j_1-4j_2+2}{(2j_1-1)(2j_2-1)}=\frac{19}{35}.
$$  
Hence, \eqref{maincor} implies the following unconditional result.

\begin{Corollary}  \label{maincorollary}
Under the assumptions of Theorem \ref{mainresult}, we have the estimates
\begin{equation} \label{maincor1} 
\sum_{x\in \mathbb{Z}} \sum_{y\in\mathbb{Z}} \Psi\left(\frac{x-A}{N}\right) \Phi\left(\frac{y-B}{N}\right) \chi(Q(x,y))\\ \ll \begin{cases}  N^{3/2}q^{1/5} \quad \mbox{if } q^{19/35}\le N\le q^{7/10}\\ \\N^{29/17}q^{3/34} \quad \mbox{if } q^{3/10}\le N< q^{19/35}.
\end{cases} 
\end{equation}
\end{Corollary}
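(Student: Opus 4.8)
The plan is to obtain Corollary \ref{maincorollary} as the explicit specialization of Theorem \ref{mainresult} to the admissible parameters $j_1=3$ and $j_2=4$. The conditional estimate \eqref{maincor} already isolates all of the analytic content, so no new summation or $p$-adic input is required here; the corollary reduces to discharging the hypotheses and performing the resulting arithmetic. First I would appeal to section \ref{j1j2}, where the bounds \eqref{TT1esti} and \eqref{TT2esti} are verified precisely for $j_1=3$ and $j_2=4$. Once these are in hand, the standing assumption of Theorem \ref{mainresult} that \eqref{TT1esti} and \eqref{TT2esti} hold is satisfied unconditionally for these parameter values, so \eqref{maincor} becomes an unconditional pair of estimates.

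Next I would substitute $j_1=3$ and $j_2=4$ into the exponents appearing in \eqref{maincor}. The relevant simplifications are exactly those recorded in the display preceding the corollary: $(j_1-1)/(2(2j_1-1))=1/5$, $(3j_1-2)/(2(2j_1-1))=7/10$ and $(j_1-1)/(2j_1-1)=2/5$ for the first branch, together with $(17j_2-10)/(2(5j_2-3))=29/17$, $(j_2-1)/(2(5j_2-3))=3/34$ and $(j_2-1)/(3j_2-2)=3/10$ for the second. This turns the first branch of \eqref{maincor} into $N^{3/2}q^{1/5}$, valid for $q^{2/5}\le N\le q^{7/10}$, and the second branch into $N^{29/17}q^{3/34}$, valid for $q^{3/10}\le N\le q^{2/3}$.

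It then remains to merge these into the single statement of the corollary by selecting the smaller bound on each overlap. The crossover computation carried out before the corollary shows that $N^{29/17}q^{3/34}\le N^{3/2}q^{1/5}$ precisely when $N\le q^{19/35}$; this is the specialization of the general inequality $N\le q^{(3j_1j_2-j_1-4j_2+2)/((2j_1-1)(2j_2-1))}$ at $j_1=3$, $j_2=4$. I would therefore split the range at $N=q^{19/35}$: on $q^{19/35}\le N\le q^{7/10}$ the first branch is the better bound, and it is valid there since $2/5<19/35$, giving $N^{3/2}q^{1/5}$; on $q^{3/10}\le N<q^{19/35}$ the second branch is the better bound, and it is valid there since $19/35<2/3$, giving $N^{29/17}q^{3/34}$. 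The only point requiring care is this range bookkeeping, namely checking that each chosen branch lies inside its own validity window from \eqref{maincor} and that the two windows together cover $[q^{3/10},q^{7/10}]$; there is no genuine analytic obstacle, since all the difficulty has already been absorbed into Theorem \ref{mainresult} and into the verification of \eqref{TT1esti} and \eqref{TT2esti} in section \ref{j1j2}.
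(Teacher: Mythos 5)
Your proposal is correct and follows exactly the paper's own route: the paper likewise obtains Corollary \ref{maincorollary} by invoking section \ref{j1j2} to justify $j_1=3$, $j_2=4$, substituting these values into the exponents of \eqref{maincor} (yielding $1/5$, $7/10$, $2/5$, $29/17$, $3/34$, $3/10$), and using the crossover computation $N\le q^{(3j_1j_2-j_1-4j_2+2)/((2j_1-1)(2j_2-1))}=q^{19/35}$ to select the better branch on each range. Your additional check that each chosen branch stays inside its validity window ($2/5<19/35<2/3$) is the same implicit bookkeeping the paper relies on, so nothing is missing.
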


As established in section \ref{j1j2} as well, we may take $j_1=j_2=3$ if $p\equiv 5\bmod{12}$. In this case, we have 
$$
\frac{j_1-1}{2(2j_1-1)}= \frac{1}{5}, \quad \frac{7j_2-4}{2(5j_2-3)}=\frac{17}{24}, \quad \frac{j_2-1}{2(5j_2-3)}=\frac{1}{12},
$$
$$
\frac{j_1-1}{2j_1-1}=\frac{2}{5}, \quad \frac{3j_1-2}{2(2j_1-1)}=\frac{7}{10}, \quad \frac{j_2-1}{3j_2-2}=\frac{2}{7}, \quad \frac{3j_1j_2-j_1-4j_2+2}{(2j_1-1)(2j_2-1)}=\frac{14}{25}.
$$  
Hence, \eqref{maincor} also implies the following unconditional result.

\begin{Corollary}  
If $p\equiv \pm 5\bmod{12}$, then under the assumptions of Theorem \ref{mainresult}, we have the estimates
\begin{equation} \label{maincor2} 
\sum_{x\in \mathbb{Z}} \sum_{y\in\mathbb{Z}} \Psi\left(\frac{x-A}{N}\right) \Phi\left(\frac{y-B}{N}\right) \chi(Q(x,y)) \ll
\begin{cases} N^{3/2}q^{1/5} \quad \mbox{if } q^{14/25}\le N\le q^{7/10}\\ \\ N^{41/24}q^{1/12} \quad \mbox{if } q^{2/7}\le N< q^{14/25}.
\end{cases}
\end{equation}
\end{Corollary}
$ $\\
{\bf Remarks:}\medskip\\ (i) In addition to the assumption $(\Delta,p)=1$, we also assume that $(c,p)=1$ in our main results above. In section \ref{transf}, this restriction will allow us to diagonalize our quadratic form in such a way that the new variables run over independent intervals. \medskip\\
(ii) In this article, we use a one-dimensional method, i.e., we estimate the sum over one variable non-trivially and then sum up the result over the other variable trivially. It seems conceivable that a two-dimensional method would be capable of improving our result, at the cost of more complicated calculations. This may be considered in future research.   \medskip\\
(iii) We will apply one or two $p$-adic Weyl shifts, followed by Poisson summation, obtaining the two different bounds stated in our results above. The first of them results from one Weyl shift, the second from two Weyl shifts, respectively. Two Weyl shifts are favorable for small values of $N$, as compared to $q$. It is likely that an application of more than two Weyl shifts gives new non-trivial bounds for even smaller values of $N$, but the details of the calculations become more and more complicated as the number of Weyl shifts increases. In particular, it gets harder and harder to control the multiplicities of critical points of certain rational functions arising from the method. The precise effect of a larger number of Weyl shifts may be studied in future research as well.  \medskip\\
(iv) Very short summation intervals of size $\ll q^{\varepsilon}$ are accessible by Korobov's method \cite{Kor}, which is a $p$-adic adaption of Vinogradov's method \cite{Vin}. However, the savings over the trivial bound which one achieves along these lines are very small, whereas the goal of this paper is to obtain as strong as possible savings.   

\subsection{Comparison with Heath-Brown's result}
In the following, we compare our bounds in \eqref{maincor}, \eqref{maincor1} and \eqref{maincor2} for prime power moduli with Heath-Brown's bound in Theorem \ref{HeathBrown} for squarefree moduli. 
Removing the smooth weights using partial summation and applying Theorem \ref{HeathBrown} for $r\ge 3$ and $C$ being a box gives a bound of
\begin{equation} \label{HBbound}
\ll N^{2-1/r}q^{(r+2)/(4r^2)+\varepsilon} \quad \mbox{if } q^{1/4+1/(2r)} \le N \le q^{5/12+1/(2r)}
\end{equation}
for the left-hand side of \eqref{maincor} if $q$ is squarefree. The above $N$-ranges overlap. We calculate that \eqref{HBbound} is optimal in the range 
$$
q^{(r^2+5r+2)/(4(r^2+r))}\le N\le q^{(r^2+3r-2)/(4(r^2-r))}.
$$
We find that 
$$
N^{3/2}q^{(j_1-1)/(2(2j_1-1))} \leq N^{2-1/r}q^{(r+2)/(4r^2)}\quad  \mbox{iff } N\geq q^{r(j_1-1)/((r-2)(2j_1-1))-(r+2)/(2r(r-2))},
$$ 
and therefore our first bound in \eqref{maincor} supersedes \eqref{HBbound} if 
$$
\frac{r(j_1-1)}{(r-2)(2j_1-1)}-\frac{r+2}{2r(r-2)}\le \frac{r^2+5r+2}{4(r^2+r)}.
$$
For our particular case when $j_1=3$, this means that 
$$
N^{3/2}q^{1/5} \leq N^{2-1/r}q^{(r+2)/(4r^2)}\quad  \mbox{iff } N\geq q^{2r/(5(r-2))-(r+2)/(2r(r-2))},
$$ 
and our first bound in \eqref{maincor1} supersedes \eqref{HBbound} if
$$
\frac{2r}{5(r-2)}-\frac{r+2}{2r(r-2)}\le \frac{r^2+5r+2}{4(r^2+r)},
$$
which is the case if $3\le r\le 5$.
Furthermore, we find that 
$$
N^{(17j_2-10)/(2(5j_2-3))}q^{(j_2-1)/(2(5j_2-3))}\leq N^{2-1/r}q^{(r+2)/(4r^2)}\quad  \mbox{iff }
$$
$$
 N\geq q^{(2(j_2-1)r^2-(5j_2-3)(r+2))/(2(3j_2-2)r^2-4(5j_2-3)r)},
$$  
and therefore our second bound in \eqref{maincor} supersedes \eqref{HBbound} if 
\begin{equation} \label{takes}
\frac{2(j_2-1)r^2-(5j_2-3)(r+2)}{2(3j_2-2)r^2-4(5j_2-3)r}\le\frac{r^2+5r+2}{4(r^2+r)}.
\end{equation}
For our particular case when $j_2=4$, this means that
$$
N^{29/17}q^{3/34}\leq N^{2-1/r}q^{(r+2)/(4r^2)}\quad  \mbox{iff }
 N\geq q^{(6r^2-17(r+2))/(20r^2-68r)},
$$  
and our second bound in \eqref{maincor1} supersedes \eqref{HBbound} if 
$$
\frac{6r^2-17(r+2)}{20r^2-68r}\le \frac{r^2+5r+2}{4(r^2+r)},
$$
which is the case if $4\le r\le 17$.
Note that 
$$
\frac{r^2+5r+2}{4(r^2+r)}= \begin{cases} 13/24=0.542... & \mbox{if } r=3\\ 47/153=0.307... & \mbox{if } r=17\end{cases}
$$ 
and $19/35>13/24$. It follows that our bound \eqref{maincor1} for prime power moduli is comparably stronger than Heath-Brown's bound for squarefree moduli in the range $q^{47/153}\le N\le q^{7/12}$. More in detail, the first bound in \eqref{maincor1} supersedes Heath-Brown's bound for $r=3$ in the range $q^{19/35}\le N\le q^{7/12}$, and the second bound in \eqref{maincor1} supersedes Heath-Brown's bounds for $r=3,4,...,17$ in the range $q^{47/153}\le N\le q^{19/35}$. 

If $j_2=3$, then \eqref{takes} takes the form 
$$
\frac{2r^2-6(r+2)}{7r^2-24r}\le\frac{r^2+5r+2}{4(r^2+r)}.
$$
If this is the case, then our second bound in \eqref{maincor2} supersedes \eqref{HBbound}. This happens if $4\le r\le 25$.
Note that 
$$
\frac{r^2+5r+2}{4(r^2+r)}= \begin{cases} 13/24=0.542... & \mbox{if } r=3\\ 94/325=0.289... & \mbox{if } r=25\end{cases}
$$ 
and $14/25>13/24$. It follows that our bound \eqref{maincor2} for prime power moduli is comparably stronger than Heath-Brown's bound for squarefree moduli in the range $q^{94/325}\le N\le q^{7/12}$. More in detail, the first bound in \eqref{maincor2} supersedes Heath-Brown's bound for $r=3$ in the range $q^{14/25}\le N\le q^{7/12}$, and the second bound in \eqref{maincor1} supersedes Heath-Brown's bounds for $r=3,4,...,25$ in the range $q^{94/325}\le N\le q^{14/25}$. 

\section{Preliminaries}
Our first proposition describes a well-known mechanism which translates Dirichlet characters modulo prime powers into additive characters. 
\begin{Proposition} \label{additive characters}   Let $p$ be an odd prime and $n\in \mathbb{N}$ with $n\ge 2$. Let $\chi$ be a primitive character modulo $p^n$. Then there exists $a_0\in \mathbb{Z}_p^{\ast}$ such that 
$$
\chi(1+pt) =e\left(\frac{a_0\log_p(1+pt)}{p^n}\right)
$$
for every $t\in \mathbb{Z}$. 
\end{Proposition}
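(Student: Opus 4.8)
The plan is to exploit the fact that, for an odd prime $p$, the $p$-adic logarithm furnishes a topological group isomorphism
$$
\log_p:\ (1+p\mathbb{Z}_p,\ \times)\ \xrightarrow{\ \sim\ }\ (p\mathbb{Z}_p,\ +),
$$
with inverse the $p$-adic exponential, and that this isomorphism respects the natural filtrations, i.e.\ $\log_p(1+p^k\mathbb{Z}_p)=p^k\mathbb{Z}_p$ for every $k\ge 1$. This valuation-preserving property is precisely where $p>2$ is used: for $x$ with $v_p(x)=k\ge 1$ the leading term of the logarithmic series strictly dominates, since $v_p(x^m/m)=mk-v_p(m)>k$ for all $m\ge 2$ when $p\ge 3$. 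Consequently $\log_p$ descends to an isomorphism of finite cyclic groups
$$
(1+p\mathbb{Z})/(1+p^n\mathbb{Z})\ \xrightarrow{\ \sim\ }\ p\mathbb{Z}/p^n\mathbb{Z},
$$
both of order $p^{n-1}$.

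First I would transport $\chi$ through this isomorphism. The restriction of $\chi$ to the principal units is a homomorphism $\lambda(u):=\chi(u\bmod p^n)$ on $1+p\mathbb{Z}_p$ which is locally constant and factors through $(1+p\mathbb{Z}_p)/(1+p^n\mathbb{Z}_p)$. Setting $\mu:=\lambda\circ\exp_p$, I obtain a character $\mu$ of the additive group $p\mathbb{Z}_p$ factoring through $p\mathbb{Z}_p/p^n\mathbb{Z}_p$; it then remains only to identify $\mu$ explicitly. To this end I would classify the characters of $p\mathbb{Z}_p/p^n\mathbb{Z}_p$: each $a_0\in\mathbb{Z}_p$ defines a character $x\mapsto e(a_0 x/p^n)$, where $e(\cdot)$ of a $p$-adic number is understood through its $p$-adic fractional part. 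This is well defined on the quotient because $a_0\cdot p^n\mathbb{Z}_p/p^n=a_0\mathbb{Z}_p\subseteq\mathbb{Z}_p$ contributes a trivial factor, and two values $a_0,a_0'$ yield the same character precisely when $a_0\equiv a_0'\bmod p^{n-1}$. As $a_0$ ranges over $\mathbb{Z}_p/p^{n-1}\mathbb{Z}_p$ this produces exactly $p^{n-1}$ distinct characters, hence all of them. Therefore $\mu(x)=e(a_0 x/p^n)$ for some $a_0\in\mathbb{Z}_p$, and substituting $x=\log_p(1+pt)$ gives the asserted identity $\chi(1+pt)=e(a_0\log_p(1+pt)/p^n)$; note both sides depend only on $t\bmod p^{n-1}$, so it suffices to argue on the finite quotient.

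Finally, the point where primitivity enters — and the step I expect to require the most care — is showing $a_0\in\mathbb{Z}_p^{\ast}$, i.e.\ $p\nmid a_0$. If $p\mid a_0$, then for $x\in p^{n-1}\mathbb{Z}_p$ one has $a_0 x\in p^n\mathbb{Z}_p$, so $\mu$ is trivial on $p^{n-1}\mathbb{Z}_p$; transporting back through $\log_p$, the character $\chi$ would be trivial on $1+p^{n-1}\mathbb{Z}_p$, whose image in $(\mathbb{Z}/p^n\mathbb{Z})^{\ast}$ is exactly the kernel of the reduction map $(\mathbb{Z}/p^n\mathbb{Z})^{\ast}\to(\mathbb{Z}/p^{n-1}\mathbb{Z})^{\ast}$. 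But then $\chi$ would factor through a character modulo $p^{n-1}$, contradicting the primitivity of $\chi$ modulo $p^n$. Hence $p\nmid a_0$, i.e.\ $a_0\in\mathbb{Z}_p^{\ast}$, which completes the proof. The real obstacle here is not any single hard estimate but the careful assembly of these standard $p$-adic facts: verifying the filtration-respecting property of $\log_p$ (using $p>2$), fixing the convention for $e(\cdot)$ on $\mathbb{Q}_p$ so that the additive-character classification is rigorous, and correctly matching the kernel of reduction with $1+p^{n-1}\mathbb{Z}_p$ to convert primitivity into the unit condition on $a_0$.
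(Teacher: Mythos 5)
Your proof is correct, but it takes a different route from the paper: the paper disposes of this proposition in one line, citing Lemma 13 of Mili\'cevi\'c's work \cite{Mil}, whereas you give a complete, self-contained argument. Your three steps are all sound: (i) the filtration-preserving isomorphism $\log_p:(1+p\mathbb{Z}_p,\times)\to(p\mathbb{Z}_p,+)$, where the verification $v_p(x^m/m)=mk-v_p(m)>k$ for $m\ge 2$ is exactly where $p>2$ enters; (ii) the classification of the $p^{n-1}$ characters of $p\mathbb{Z}_p/p^n\mathbb{Z}_p$ as $x\mapsto e(a_0x/p^n)$ with $a_0$ determined modulo $p^{n-1}$, which forces $\mu=\lambda\circ\exp_p$ to be of this shape; and (iii) the conversion of primitivity into $p\nmid a_0$, by observing that $p\mid a_0$ would make $\chi$ trivial on the kernel of $(\mathbb{Z}/p^n\mathbb{Z})^{\ast}\to(\mathbb{Z}/p^{n-1}\mathbb{Z})^{\ast}$ and hence of conductor dividing $p^{n-1}$. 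This is, in substance, a reproof of the cited lemma. What the citation buys the paper is brevity and deference to a standard reference; what your argument buys is self-containedness and transparency about where each hypothesis (oddness of $p$, primitivity of $\chi$) is actually used, plus an explicit handling of the convention that $e(\cdot)$ of a $p$-adic number means the exponential of its fractional part --- a point the paper leaves implicit. One cosmetic slip: you write $(1+p\mathbb{Z})/(1+p^n\mathbb{Z})$ where you mean $(1+p\mathbb{Z}_p)/(1+p^n\mathbb{Z}_p)$ (or equivalently the subgroup of residues $\equiv 1 \bmod p$ inside $(\mathbb{Z}/p^n\mathbb{Z})^{\ast}$); as written, $1+p\mathbb{Z}$ is not a group under multiplication, but the intended object is clear and the argument is unaffected.
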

\begin{proof}
This is a consequence of \cite[Lemma 13]{Mil}. 
\end{proof}
Next, we provide a $p$-adic version of Weyl differencing (with short $p$-adic intervals). 
\begin{Proposition} \label{Weyl shift} Let $p$ be a prime and $n\in \mathbb{N}$. Let $C\in \mathbb{R}$, $X\ge 1$ and $H=p^{\kappa}\le X$ with $\kappa\in \mathbb{N}_0$. Let $\Phi:\mathbb{R}\rightarrow \mathbb{R}_{\ge 0}$ be a Schwartz class function with compact support in $[-1,1]$ and $F:\mathbb{Z}\rightarrow \mathbb{R}$ be a function. Then
\begin{equation*} 
\begin{split}
& \left|\sum\limits_{w\in \mathbb{Z}} \Phi\left(\frac{w-C}{X}\right)e\left(\frac{F(w)}{p^n}\right)\right|^2\\
\ll & XH+H\sum\limits_{0<|h|<2X/H}\left| \sum\limits_{w\in \mathbb{Z}} \Phi_h\left(\frac{w-C}{X}\right) e\left(\frac{F(w+p^{\kappa}h)-F(w)}{p^n}\right) \right|,
\end{split}
\end{equation*}
where 
$$
 \Phi_h(y):= \Phi(y) \Phi\left(y+\frac{p^{\kappa}h}{X}\right).
$$
The implied constant above depends only on $\Phi$. 
\end{Proposition}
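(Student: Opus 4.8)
The plan is to prove the estimate by a shifted van der Corput (Weyl) differencing, in which the elementary shift is by multiples of $H=p^{\kappa}$ instead of by $1$; the prime-power structure plays no role in this proposition and becomes relevant only in the subsequent analysis of the differenced sum. Write $S:=\sum_{w\in\mathbb{Z}}\Phi\big((w-C)/X\big)e\big(F(w)/p^n\big)$ and set $J:=\lfloor X/H\rfloor$, which is a positive integer because $H\le X$. For each $\ell\in\{0,1,\dots,J-1\}$ the reindexing $w\mapsto w+p^{\kappa}\ell$ leaves $S$ unchanged, so averaging over $\ell$ gives
\[
JS=\sum_{w\in\mathbb{Z}}\;\sum_{\ell=0}^{J-1}\Phi\!\left(\frac{w+p^{\kappa}\ell-C}{X}\right)e\!\left(\frac{F(w+p^{\kappa}\ell)}{p^n}\right).
\]

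First I would apply Cauchy--Schwarz in $w$. The inner sum over $\ell$ is a union of at most $J$ translates of the support of $\Phi\big((\,\cdot\,-C)/X\big)$, whose centres span a range of length $H(J-1)$, so it is supported on a set of $w$ of cardinality $\ll X+HJ\ll X$ (using $HJ\le X$). Hence the length factor from Cauchy--Schwarz is $O(X)$ and
\[
J^2|S|^2\ll X\sum_{w\in\mathbb{Z}}\left|\sum_{\ell=0}^{J-1}\Phi\!\left(\frac{w+p^{\kappa}\ell-C}{X}\right)e\!\left(\frac{F(w+p^{\kappa}\ell)}{p^n}\right)\right|^2 .
\]
Expanding the square yields a double sum over $\ell_1,\ell_2\in\{0,\dots,J-1\}$, which I would split according to $h:=\ell_1-\ell_2$. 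The diagonal $h=0$ contributes $\ll JX$, since for each fixed $\ell$ one has $\sum_{w}\Phi(\,\cdot\,)^2\ll X$.

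For the off-diagonal terms, for each fixed $h$ with $0<|h|<J$ I would perform the bijective substitution $w'=w+p^{\kappa}\ell_2$. Then $w+p^{\kappa}\ell_1=w'+p^{\kappa}h$, the product of the two $\Phi$-factors becomes exactly $\Phi_h\big((w'-C)/X\big)$ by the definition of $\Phi_h$, and the phase becomes $e\big((F(w'+p^{\kappa}h)-F(w'))/p^n\big)$, which no longer depends on the remaining free index. Since there are at most $J$ pairs $(\ell_1,\ell_2)$ with $\ell_1-\ell_2=h$, the off-diagonal contribution is $\ll J\sum_{0<|h|<J}|T_h|$, where $T_h:=\sum_{w}\Phi_h\big((w-C)/X\big)e\big((F(w+p^{\kappa}h)-F(w))/p^n\big)$ is the differenced sum on the right-hand side. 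Collecting both contributions gives $J^2|S|^2\ll XJ\big(X+\sum_{0<|h|<J}|T_h|\big)$; dividing by $J^2$ and using $X/J\asymp H$ (a consequence of $J=\lfloor X/H\rfloor\asymp X/H$) produces $|S|^2\ll XH+H\sum_{0<|h|<J}|T_h|$. Finally, since $J\le X/H<2X/H$ and each $|T_h|\ge 0$, enlarging the summation range to $0<|h|<2X/H$ only increases the right-hand side, giving the asserted bound.

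I expect no conceptual obstacle: the only points requiring care are bookkeeping. One must verify that the averaged inner sum is supported on $O(X)$ integers uniformly in the parameters, so that the Cauchy--Schwarz length factor is genuinely $O(X)$; one must check the degenerate regime where $H$ is comparable to $X$, so that $J$ is small and the bound reduces to the trivial $|S|^2\ll X^2\asymp XH$; and one must confirm that the reindexing reproduces $\Phi_h$ with precisely the argument $(w'-C)/X$ and the phase $F(w'+p^{\kappa}h)-F(w')$ as stated. Throughout, the implied constants depend only on the supremum and the support length of $\Phi$.
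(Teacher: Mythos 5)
Your proof is correct, and it is essentially the argument the paper has in mind: the paper does not write out a proof but simply cites Mili\'cevi\'c's inequality (37) and says it ``can be proved in the same way,'' which is exactly the shifted van der Corput differencing you carry out (average over shifts by multiples of $p^{\kappa}$, Cauchy--Schwarz with an $O(X)$ support factor, diagonal versus off-diagonal split, and $X/J\asymp H$). Your writeup supplies in full the details the paper outsources, including the two points that need care: the degenerate regime $H\asymp X$, where the diagonal term alone gives the bound, and the harmless enlargement of the summation range from $0<|h|<J$ to $0<|h|<2X/H$ using the non-negativity of the terms $|T_h|$.
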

\begin{proof}
This is a smoothed version of \cite[inequality (37)]{Mil} and can be proved in the same way.
\end{proof}
Further, we will need the Poisson summation formula over residue classes. 
\begin{Proposition} \label{Poisson summation} Let $\Omega:\mathbb{R}\rightarrow \mathbb{C}$ be a Schwartz class function. Let $C\in \mathbb{R}$, $X\ge 1$, $q\in \mathbb{N}$ and $r\in \mathbb{Z}$. Then
$$
\sum\limits_{t\equiv r\bmod{q}} \Omega\left(\frac{t-C}{X}\right)=\frac{X}{q} \sum\limits_{t\in \mathbb{Z}} \hat{\Omega}\left(\frac{tX}{q}\right) e\left(\frac{t(r-C)}{q}\right). 
$$
\end{Proposition}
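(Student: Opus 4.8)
The plan is to reduce the claim to the classical Poisson summation formula for a single Schwartz function on $\mathbb{Z}$, namely $\sum_{m\in\mathbb{Z}} g(m) = \sum_{k\in\mathbb{Z}} \hat{g}(k)$, applied to a suitably rescaled version of $\Omega$. First I would parametrize the residue class: writing $t = r + qm$ with $m$ ranging over all of $\mathbb{Z}$, the left-hand side becomes
$$
\sum_{t\equiv r\bmod q} \Omega\left(\frac{t-C}{X}\right) = \sum_{m\in\mathbb{Z}} \Omega\left(\frac{r+qm-C}{X}\right) = \sum_{m\in\mathbb{Z}} g(m),
$$
where I define $g(m) := \Omega\bigl((r+qm-C)/X\bigr)$. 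Since $\Omega$ is Schwartz and the map $m \mapsto (r+qm-C)/X$ is affine, $g$ is again a Schwartz function of the real variable $m$, so Poisson summation applies without any convergence difficulties.

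The heart of the computation is to express $\hat{g}(k)$ in terms of $\hat{\Omega}$. I would compute
$$
\hat{g}(k) = \int_{\mathbb{R}} \Omega\left(\frac{r+qu-C}{X}\right) e(-ku)\,{\rm d}u
$$
and perform the substitution $v = (r+qu-C)/X$, so that $u = (Xv - r + C)/q$ and ${\rm d}u = (X/q)\,{\rm d}v$. Carrying this through gives
$$
\hat{g}(k) = \frac{X}{q}\, e\left(\frac{k(r-C)}{q}\right) \int_{\mathbb{R}} \Omega(v)\, e\left(-\frac{kXv}{q}\right){\rm d}v = \frac{X}{q}\, e\left(\frac{k(r-C)}{q}\right) \hat{\Omega}\left(\frac{kX}{q}\right),
$$
where in the last step I recognize the remaining integral as the Fourier transform $\hat{\Omega}$ evaluated at $kX/q$, using the definition of $\hat{\Omega}$ fixed in the introduction. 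Substituting this expression back into $\sum_{k\in\mathbb{Z}} \hat{g}(k)$ and renaming the summation index $k$ to $t$ yields exactly the right-hand side of the claimed identity.

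The step that requires the most care, rather than any genuine obstacle, is tracking the affine change of variables inside the Fourier integral: one must correctly account for the Jacobian factor $X/q$, the dilation $kX/q$ in the argument of $\hat{\Omega}$, and the phase $e(k(r-C)/q)$ that arises from the shift by $r - C$. Once these are bookkept accurately, the result follows immediately from standard Poisson summation, and no analytic subtleties beyond the Schwartz decay of $\Omega$ (which guarantees absolute convergence of both sides and validity of the summation formula) enter the argument.
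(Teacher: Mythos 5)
Your proof is correct and follows essentially the same route as the paper, whose proof simply states that the identity ``arises by a linear change of variables from the well-known basic version of the Poisson summation formula''; you have merely carried out that affine change of variables explicitly, with the Jacobian factor $X/q$, the dilation $kX/q$, and the phase $e(k(r-C)/q)$ all bookkept correctly.
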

\begin{proof} This arises by a linear change of variables from the well-known basic version of the Poisson summation formula which states that
$$
\sum\limits_{m\in \mathbb{Z}} F(m)=\sum\limits_{w\in \mathbb{Z}} \hat{F}(m)
$$
for any Schwartz class function $F:\mathbb{R}\rightarrow \mathbb{C}$ (see \cite{StSh}).
\end{proof}
Finally, we will need the following evaluation of complete exponential sums modulo prime powers due to Cochrane, Liu and Zheng, which amounts to a $p$-adic version of the stationary phase method.  To formulate this result, we first introduce some notations. For a polynomial $f$ with integer coefficients, we define its order $\text{ord}_p(f)$ modulo $p$ as the largest exponent $k$ such that $p^k$ divides all the coefficients of $f$. If $f=f_1/f_2\in \mathbb{Z}(X)$ with $f_1,f_2\in \mathbb{Z}[X]$ is a rational function over $\mathbb{Z}$, then we define its order modulo $p$ as $\text{ord}_p(f):=\text{ord}_p(f_1)-\text{ord}_p(f_2)$. Set $t:=\text{ord}_p(f')$. A critical point $\alpha$ of $f$ modulo $p$ is a zero of $p^{-t}f'$ over $\mathbb{Z}/p\mathbb{Z}$, i.e. $p^{-t}f'(\alpha)\equiv 0\bmod{p}$. By $\nu_p(f,\alpha)$ we denote its multiplicity. If $\alpha$ is not a critical point of $f$ modulo $p$, then we set $\nu_p(f,\alpha):=0$. 

\begin{Proposition}[Cochrane-Zheng] \label{exp sum estimates}
Let $p$ be a prime and $f=f_1/f_2$ be a nonconstant rational function, where $f_1,f_2\in \mathbb{Z}[X]$. Set $t:=\text{\rm ord}_p(f')$. Suppose that $\alpha\in \mathbb{Z}$ such that $f_2(\alpha)\not\equiv 0\bmod{p}$. Set $\nu:=\nu_p(f,\alpha)$. Let $m$ be a positive integer. Suppose that $m\ge t+2$ if $p$ is odd and $m\ge t+3$ if $p=2$. Set 
$$
S_{\alpha}(f,p^m):=\sum\limits_{\substack{n\bmod{p^m}\\ n\equiv \alpha\bmod{p}}} e_{p^m}\left(f(n)\right).
$$  
Then $S_\alpha(f,p^m)=0$ if $\nu=0$ and 
$$
\left|S_\alpha(f,p^m)\right|\le \nu p^{t/(\nu+1)}p^{m(1-1/(\nu+1))} 
$$
if $\nu\ge 1$. 
\end{Proposition}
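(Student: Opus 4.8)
The plan is to prove this by the $p$-adic method of stationary phase, localizing the complete sum in the residue class of $\alpha$ and exploiting the Taylor expansion of $f$ about $\alpha$ together with tight control on the $p$-adic valuations of its coefficients. First I would set up the local expansion. Writing $f'=p^t g$ with $g=g_1/g_2$ a rational function of order $\mathrm{ord}_p(g)=0$, the critical points modulo $p$ are exactly the zeros of $g$ modulo $p$, and $\nu=\nu_p(f,\alpha)$ is the multiplicity of $\alpha$ as such a zero. Parametrising $n=\alpha+pw$ with $w$ running over $\mathbb{Z}/p^{m-1}\mathbb{Z}$, I would expand $f(\alpha+pw)=f(\alpha)+\sum_{k\ge 1}a_k(pw)^k$ with $a_k=f^{(k)}(\alpha)/k!$. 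The hypothesis $f_2(\alpha)\not\equiv 0\bmod p$ is exactly what guarantees that each $a_k$ lies in $\mathbb{Z}_p$, since the Taylor coefficients of a rational function with unit denominator are $p$-integral even after the factorial divisions. One records that $\mathrm{ord}_p(a_1)=\mathrm{ord}_p(f'(\alpha))$, which equals $t$ precisely when $\nu=0$ and is at least $t+1$ when $\nu\ge 1$.

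For the vanishing assertion ($\nu=0$) I would use a Weyl-type shift $n\mapsto n+p^{m-1-t}u$ with $u$ ranging over $\mathbb{Z}/p\mathbb{Z}$. The hypothesis $m\ge t+2$ is exactly what keeps this shift inside the class $n\equiv\alpha\bmod p$ and, simultaneously, forces every Taylor term of degree $\ge 2$ to vanish modulo $p^m$ after the scaling; the case $p=2$ needs the extra room $m\ge t+3$ to absorb the denominator $2!$ appearing in $a_2$. The surviving linear term then contributes the factor $e_p\!\left(u\,p^{-t}f'(n)\right)$, and since $p^{-t}f'(n)\not\equiv 0\bmod p$ throughout the class, summing over $u\in\mathbb{Z}/p\mathbb{Z}$ annihilates the entire sum, giving $S_\alpha(f,p^m)=0$.

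The substantive case is $\nu\ge 1$, and this is where I expect the main difficulty. Here I would split $n=y+p^{\lceil m/2\rceil}z$, so that the second-order Taylor term is retained modulo $p^m$ while the margin $m-t\ge 2$ (again from $m\ge t+2$) kills the cubic and higher terms, and then evaluate the inner sum over $z$ as a possibly degenerate Gauss-type sum. When $\nu=1$ the coefficient $a_2$ is a unit times $p^t$ and this inner sum is an ordinary quadratic Gauss sum of modulus $p^{(m+t)/2}$, yielding the square-root cancellation that is precisely the $\nu=1$ instance of the claimed bound. When $\nu\ge 2$ the quadratic term degenerates ($\mathrm{ord}_p(a_2)\ge t+1$), and one instead isolates the linear constraint $p^{-t}f'(y)\equiv 0$, which confines $y$ to a thin set of near-critical points and produces a sum of the same shape but with strictly smaller multiplicity; iterating this descent is what accumulates the geometric-mean exponent $m\left(1-1/(\nu+1)\right)$ together with the factor $p^{t/(\nu+1)}$ and the combinatorial constant $\nu$. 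The hard part is precisely this bookkeeping: one must control, uniformly across the descent, the Hensel-type count of solutions of $p^{-t}f'(y)\equiv 0$ at each increasing precision in terms of the multiplicity $\nu$, and then match the exponents so that $p^{t/(\nu+1)}p^{m(1-1/(\nu+1))}$ emerges with the stated constant. The factorial denominators $k!$ in the rational-function Taylor coefficients, which must stay controlled throughout, are exactly what pin down the lower bounds $m\ge t+2$ for odd $p$ and $m\ge t+3$ for $p=2$.
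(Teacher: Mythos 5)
The paper itself gives no proof of this proposition: its ``proof'' is the single line citing \cite[Theorem 3.1]{Coch}, so your attempt must be measured against the argument in Cochrane--Liu--Zheng, which is indeed the $p$-adic stationary-phase argument you outline. Your case $\nu=0$ is essentially correct and complete: since $m\ge t+2$, the shift $n\mapsto n+p^{m-1-t}u$ preserves the class $n\equiv\alpha\bmod p$; the degree-$k$ Taylor term has $p$-adic order at least $k(m-1-t)+t-\mathrm{ord}_p(k)\ge m$ for $k\ge 2$ and $p$ odd (and the failure of this at $k=2$, $p=2$ is exactly why $m\ge t+3$ is needed there); and averaging over $u\bmod p$ annihilates the sum because $p^{-t}f'(n)\equiv p^{-t}f'(\alpha)\not\equiv 0\bmod p$ throughout the class.

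The genuine gap is in the case $\nu\ge 1$, which is the entire content of the theorem, and you name it but do not close it. The one-step argument you describe --- split $n=y+p^jz$, observe that the $z$-sum vanishes unless $p^{-t}f'(y)\equiv 0\bmod p^{m-j-t}$, count such near-critical $y$ above a multiplicity-$\nu$ zero by a Hensel-type bound, and estimate the remaining $y$-sum trivially --- does \emph{not} yield the stated exponent. Optimizing $j$ subject to the linearization constraint $2j+t\ge m$ gives roughly $\nu\,p^{t/(2\nu)}p^{m(1-1/(2\nu))}$, which agrees with the claimed bound only when $\nu=1$ (your non-degenerate Gauss-sum case) and is strictly weaker for every $\nu\ge 2$, since $1-1/(2\nu)>1-1/(\nu+1)$. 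To obtain $\nu+1$ in place of $2\nu$ one cannot bound the near-critical sum trivially: one must recurse, viewing the sum over each lifted critical point as a complete sum of the same type for a rescaled function at a smaller power of $p$, and run an induction on $m$ supported by a lemma asserting that the multiplicities of the critical points lying above $\alpha$ at the next level sum to at most $\nu$; only then does the worst branch of the recursion assemble to the exponent $m(1-1/(\nu+1))$, the factor $p^{t/(\nu+1)}$, and the constant $\nu$. This induction and multiplicity bookkeeping is precisely what you defer as ``the hard part,'' so as written the proposal is a correct proof for $\nu=0$, a plausible proof sketch for $\nu=1$, and only a strategy statement --- with the decisive step missing --- for the general bound.
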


\begin{proof} This is found in \cite[Theorem 3.1]{Coch}.
\end{proof}

\section{Transformation into sums with additive characters} \label{transf}
Fix an odd prime $p$. Let $Q\in \mathbb{Z}_p[X,Y]$ be a quadratic form given as 
$$
Q(X,Y)=aX^2+2bXY+cY^2.
$$
Denote the determinant $\det(Q)=ac-b^2$ of $Q$ by $\Delta$. Throughout the sequel, we assume that $(c\Delta,p)=1$. Let $\Psi,\Phi:\mathbb{R}\rightarrow \mathbb{R}_{\ge 0}$ be Schwartz class functions with compact support in $[-1,1]$. We aim to estimate character sums of the form
\begin{equation} \label{aimsum}
S_Q(\Psi,\Phi,A,B,M,N;\chi):=\sum_{x\in \mathbb{Z}} \sum_{y\in\mathbb{Z}} \Psi\left(\frac{x-A}{M}\right) \Phi\left(\frac{y-B}{N}\right) \chi(Q(x,y)),
\end{equation}
where $A,B\in \mathbb{Z}$, $M,N\in \mathbb{N}$ and $\chi$ is a primitive Dirichlet character modulo $p^n$ with $n\in \mathbb{N}$. We first simplify $Q(X,Y)$ using quadratic completion as follows. Let $\overline{c}\in \mathbb{Z}_p$ such that $c\overline{c}\equiv 1\bmod{p^n}$, where $n\ge 2$. Then we have
$$
Q(X,Y)=c\left(a\overline{c}X^2+2b\overline{c}XY+Y^2\right)=c\left(\left(a\overline{c}-b^2\overline{c}^2\right)X^2+\left(b\overline{c}X+Y\right)^2\right)=c\left(\alpha X^2+Z^2\right),
$$ 
where 
$$
Z:=b\overline{c}X+Y \quad \mbox{and} \quad \alpha:=a\overline{c}-b^2\overline{c}^2=\Delta\overline{c}^2.
$$  
Hence, we may write our character sum in the form
\begin{equation} \label{aimsumtrans}
S_Q(\Psi,\Phi,A,B,M,N;\chi):=\chi(c)\sum_{x\in \mathbb{Z}} \sum_{y\in\mathbb{Z}} \Psi\left(\frac{x-A}{M}\right) \Phi\left(\frac{y-\tilde{B}}{N}\right) \chi(\alpha x^2+y^2)
\end{equation}
with 
$$
\tilde{B}:=b\overline{c}x+B.
$$
Noting that $\chi(\alpha x^2+y^2)=0$ if both $x$ and $y$ are divisible by $p$, we break the above sum into two parts 
$$
S_Q^{1}(\Psi,\Phi,A,B,M,N;\chi):=\chi(c)\sum_{x\in \mathbb{Z}}  \Psi\left(\frac{x-A}{M}\right)\sum_{\substack{y\in\mathbb{Z}\\ (y,p)=1}} \Phi\left(\frac{y-\tilde{B}}{N}\right) \chi(\alpha x^2+y^2)
$$
and 
$$
S_Q^{2}(\Psi,\Phi,A,B,M,N;\chi):=\chi(c\alpha)\sum_{\substack{y\in \mathbb{Z}\\ p|y}} \Phi\left(\frac{y-\tilde{B}}{N}\right) \sum_{\substack{x\in\mathbb{Z}\\ (x,p)=1}} \Psi\left(\frac{x-A}{M}\right)  \chi(\overline{\alpha}y^2+x^2).
$$
To bound $S_Q^1$, we shall estimate the innermost sum over $y$ non-trivially and then sum up the result over $x$ trivially.  In an analogous way, to bound $S_Q^2$, we shall estimate the innermost sum over $x$ nontrivially and then sum up the result over $y$ trivially. The above division into two subsums leaves us with estimating sums over variables which are coprime to $p$. This restriction will be advantageous in our calculations in section \ref{j1j2}. 

In the following, we focus only on the sum $S_Q^1$ since the estimation of $S_Q^2$ will be similar. So we aim to estimate single variable sums of the form
\begin{equation} \label{aimsumnew}
T(\Phi,\tilde{B},N,\beta;\chi):=\sum_{\substack{y\in \mathbb{Z}\\ (y,p)=1}} \Phi\left(\frac{y-\tilde{B}}{N}\right)\chi(\beta+y^2),
\end{equation}
where $\tilde{B},\beta \in \mathbb{Z}$ and $N\in \mathbb{N}$. This is of independent interest. We start with breaking the above sum into residue classes modulo $p$ in the form
\begin{equation}\label{splits}
\begin{split}
 &\sum_{\substack{y\in \mathbb{Z}\\ (y,p)=1}} \Phi\left(\frac{y-\tilde{B}}{N}\right)\chi(\beta+y^2)\\
=&\sum\limits_{\substack{u\bmod{p}\\ u\not\equiv 0,\beta \bmod{p}}}\sum\limits_{\substack{y\in \mathbb{Z}\\ y^2\equiv u-\beta\bmod{p}}}  \Phi\left(\frac{y-\tilde{B}}{N}\right)\chi(\beta+y^2)\\
 =&\sum\limits_{\substack{u\bmod{p}\\ u\not\equiv 0,\beta \bmod{p}}}\sum\limits_{\substack{v \bmod p\\ v^2\equiv u-\beta \bmod {p}}}\sum\limits_{\substack{y\in \mathbb{Z}\\ y\equiv v \bmod {p}}} \Phi\left(\frac{y-\tilde{B}}{N}\right)\chi(\beta+y^2)\\
 =& \sum\limits_{\substack{u\bmod{p}\\ u\not\equiv 0,\beta \bmod{p}}} \sum\limits_{\substack{v \bmod p\\ v^2\equiv u-\beta \bmod {p}}} \sum\limits_{w\in \mathbb{Z}} \Phi\left(\frac{pw+v-\tilde{B}}{N}\right)\chi\left(\beta+\left(pw+v\right)^2\right). 
\end{split}
\end{equation}
Here we note that the summation condition $u\not\equiv \beta\bmod{p}$ ensures that $(v,p)=1$ and the summation condition $u\not\equiv 0\bmod{p}$ can be included because if $u\equiv 0\bmod{p}$ then $\chi(\beta+y^2)=0$. Now,
if $v_0$ is a solution to the quadratic congruence $v_0^2\equiv u-\beta\bmod{p}$, then by Hensel's lemma, it can be uniquely lifted to a solution $v_1$ to the congruence $v_1^2\equiv u-\beta\bmod{p^n}$ satisfying $v_1\equiv v_0\bmod{p}$. Fix $u\in [0,p-1]\cap \mathbb{N}$ such that $u\not\equiv 0,\beta\bmod{p}$ and $v\in \mathbb{Z}$ such that $v^2=u-\beta \bmod{p^n}$, if existent. We are left with estimating the sum
\begin{equation} \label{Sigmasum}
\begin{split}
\Sigma(\Phi,C,X,\beta,u,v;\chi):=& \sum\limits_{w\in \mathbb{Z}} \Phi\left(\frac{pw+v-\tilde{B}}{N}\right)\chi\left(\beta+\left(pw+v\right)^2\right)\\
= & \sum\limits_{w\in \mathbb{Z}}\Phi\left(\frac{w-C}{X}\right)\chi\left(\beta+\left(pw+v\right)^2\right),
\end{split}
\end{equation}
where 
$$
C:=\frac{\tilde{B}-v}{p} \quad \mbox{and} \quad X:=\frac{N}{p}.
$$
Using Proposition \ref{additive characters}, we may write
\begin{equation*}
\begin{split}
\chi\left(\beta+\left(pw+v\right)^2\right)= & \chi\left(\beta+v^2+p(2vw+pw^2)\right)\\
= & \chi(u)\chi\left(1+p\overline{u}(2vw+pw^2)\right)\\
= & \chi(u)e\left(\frac{a_0\log_p(1+p\overline{u}g(w))}{p^n}\right),
\end{split}
\end{equation*}
where $\overline{u}\in \mathbb{Z}_p$ satisfies $u\overline{u}\equiv 1\bmod{p}^n$,
\begin{equation} \label{gdef}
g(w):=2vw+pw^2
\end{equation}
and $a_0\in \mathbb{Z}_p$ with $(a_0,p)=1$ depends only on $\chi$. Hence, we have 
$$
\Sigma(\Phi,C,X,\beta,u,v;\chi)=\chi(u)\sum\limits_{w\in \mathbb{Z}}\Phi\left(\frac{w-C}{X}\right)e\left(\frac{a_0\log_p(1+p\overline{u}g(w))}{p^n}\right).
$$
In the following we abbreviate $\Sigma(\Phi,C,X,\beta,u,v;\chi)$ by $\Sigma$. 

\section{Application of Weyl differencing}
Next we apply Weyl differencing to transform the term $\Sigma$. Set 
\begin{equation} \label{Fdef}
F(w):=a_0\log_p(1+p\overline{u}g(w))
\end{equation}
and $H_1:=p^{k_1}\le X$, where $k_1\in \mathbb{N}_0$, to be fixed later. Then Proposition \ref{Weyl shift} gives 
\begin{equation}
\begin{split}
|\Sigma|^2=& \bigg| \sum\limits_{w\in \mathbb{Z}}\Phi\left(\frac{w-C}{X}\right)e\left(\frac{F(w)}{p^n}\right)\bigg|^2\\
\ll & XH_1+H_1\sum_{0< |h_1|<2X/H_1} \bigg| \sum_{w\in \mathbb{Z}} \Phi_{h_1}\left(\frac{w-C}{X}\right)e\left(\frac{F(w+p^{k_1}h_1)-F(w)}{p^n}\right)\bigg|.
\end{split}
\end{equation}
Let $H_2:=p^{k_2}\le X$, where $k_2\in \mathbb{N}_0$, to be fixed later. A second application of Weyl differencing after using the Cauchy-Schwarz inequality gives 
\begin{align*}
 |\Sigma|^4
\ll & X^2H_1^2+H_1^2\cdot \frac{X}{H_1}\cdot \sum_{0<|h_1|< 2X/H_1}\bigg|  \sum\limits_{w\in \mathbb{Z}}\Phi_{h_1}\left(\frac{w-C}{X}\right)e\left(\frac{F(w+h_1p^{k_1})-F(w)}{p^n}\right)\bigg|^2\\
\ll & X^2H_1^2+XH_1 \sum_{0<|h_1|<2X/H_1} \bigg(XH_2+H_2 \sum_{0<|h_2|< 2X/H_2}\\ 
& \bigg|\sum_{w\in \mathbb{Z}} \Phi_{h_1,h_2}\left(\frac{w-C}{X}\right)e\left(\frac{F(w+p^{k_1}h_1+p^{k_2}h_2)-F(w+p^{k_1}h_1)-F(w+p^{k_2}h_2)+F(w)}{p^n}\right)\bigg|\bigg)\\
\ll& X^2H_1^2+X^3H_2+XH_1H_2\sum_{0<|h_1|< 2X/H_1}\sum_{0<|h_2|< 2X/H_2}\\
&\bigg|\sum_{w\in \mathbb{Z}}  \Phi_{h_1,h_2}\left(\frac{w-C}{X}\right)e\left(\frac{F(w+p^{k_1}h_1+p^{k_2}h_2)-F(w+p^{k_1}h_1)-F(w+p^{k_2}h_2)+F(w)}{p^n}\right)\bigg|,
\end{align*}
where $\Phi_{h_1,h_2}=\left(\Phi_{h_1}\right)_{h_2}$.
The function $F(t)$ has a $p$-adic Taylor series expansion at $w\in \mathbb{Z}_p$ of the form
$$F(w+\Delta)=F(w)+\Delta F'(w)+\frac{\Delta^2}{2!}\cdot F''(w)+\frac{\Delta^3}{3!}\cdot F'''(w)+\cdots
$$
if $|\Delta|_p<1$. (For background on $p$-adic power series expansions, see \cite[Chapter 3]{Kat}, for example.) It follows that
\begin{equation*}
F(w+\Delta_1)-F(w)=\Delta_1\left(F'(w)+\frac{\Delta_1}{2!}\cdot F''(w)+\frac{\Delta_1^2}{3!}\cdot F'''(w)+\cdots\right)
\end{equation*}
and 
\begin{equation*}
F(w+\Delta_1+\Delta_2)-F(w+\Delta_1)-F(w+\Delta_2)+F(w)=\Delta_1\Delta_2\left(F''(w)+\frac{\Delta_1+\Delta_2}{2}\cdot F'''(w)+\cdots\right)
\end{equation*}
if $|\Delta_1|_p,|\Delta_2|_p< 1$. Taking $\Delta_1:=p^{k_1}h_1$ and $\Delta_2:=p^{k_2}h_2$ with $k_1,k_2\ge 1$, we deduce that
\begin{equation*}
F(w+p^{k_1}h_1)-F(w)=p^{k_1}h_1\left(F'(w)+p^{k_1}G_{h_1}(w)\right)
\end{equation*}
and 
\begin{equation*}
F(w+p^{k_1}h_1+p^{k_2}h_2)-F(w+p^{k_1}h_1)-F(w+p^{k_2}h_2)+F(w)=p^{k_1+k_2}h_1h_2\left(F''(w)+p^kG_{h_1,h_2}(w)\right),
\end{equation*}
where  $k=\min\{k_1,k_2\}$,
$$
G_{h_1}(w)=\frac{1}{2}\cdot F''(w)+\frac{p^{k_1}h_1}{6}\cdot F'''(w)+\frac{p^{2k_1}h_1^2}{24}\cdot F^{(4)}(w)+\cdots
$$
and 
\begin{equation*}
\begin{split}
G_{h_1,h_2}(w)= & \left(\frac{p^{k_1-k}h_1}{2}+\frac{p^{k_2-k}h_2}{2}\right)F'''(w)+\\ & \left(\frac{p^{2k_1-k}h_1^2}{6}+\frac{p^{k_1+k_2-k}h_1h_2}{4}+\frac{p^{2k_2-k}h_2^2}{6}\right)F^{(4)}(w)+\cdots.
\end{split}
\end{equation*}
We note that for $t\ge 1$, the derivatives $F^{(t)}(w)$ are rational functions of non-negative orders over $\mathbb{Z}_p$. So if $k_1,k_2\ge 1$, then $G_{h_1}$ and $G_{h_1,h_2}$ become rational functions of non-negative orders over $\mathbb{Z}_p/p^n\mathbb{Z}_p$ since we may cut off the series expansions defining $G_{h_1}$ and $G_{h_1,h_2}$ at suitable points. 

Combining everything in this section, we obtain 
\begin{equation*}
|\Sigma|^2\ll XH_1+H_1\sum_{0< |h_1|<2X/H_1} \bigg| \sum_{w\in \mathbb{Z}} \Phi_{h_1}\left(\frac{w-C}{X}\right)e\left(\frac{h_1\left(F'(w)+p^{k_1}G_{h_1}(w)\right)}{p^{n-k_1}}\right)\bigg|
\end{equation*}
and 
\begin{equation*}
\begin{split}
|\Sigma|^4\ll & X^2H_1^2+X^3H_2+XH_1H_2\times\\ &\sum_{0<|h_1|<2X/H_1}\sum_{0<|h_2|<2X/H_2}
\left|\sum_{w\in \mathbb{Z}} \Phi_{h_1,h_2}\left(\frac{w-C}{X}\right)e\left(\frac{h_1h_2\left(F''(w)+p^kG_{h_1,h_2}(w)\right)}{p^{n-(k_1+k_2)}}\right)\right|.
\end{split}
\end{equation*}
We further divide the $h_i$-summations according to $l_i=\mbox{ord}_p(h_i)$. Writing $h_i=p^{l_i}g_i$ with $(g_i,p)=1$, we have 
 \begin{equation} \label{Weyl1}
|\Sigma|^2\ll XH_1+H_1\sum_{l_1\ge 0} \sum_{\substack{0< |g_1|<2X/(H_1p^{l_1})\\ (g_1,p)=1}} 
\left|T(l_1,g_1)\right|
\end{equation}
with 
$$
T(l_1,g_1):=\sum_{w\in \mathbb{Z}}\Phi_{p^{l_1}g_1}\left(\frac{w-C}{X}\right)e\left(\frac{g_1\left(F'(w)+p^{k_1}G_{p^{l_1}g_1}(w)\right)}{p^{n-(k_1+l_1)}}\right)
$$
and 
\begin{equation} \label{Weyl2}
\begin{split}
|\Sigma|^4\ll & X^2H_1^2+X^3H_2+ XH_1H_2\times \\ & \sum_{l_1\geq 0}\sum_{l_2\geq 0}
\sum\limits_{\substack{0<|g_1|< 2X/(H_1p^{l_1})\\(g_1,p)=1}}\sum\limits_{\substack{0<|g_2|< 2X/(H_2p^{l_2})\\(g_2,p)=1}} 
\left|T(l_1,l_2,g_1,g_2)\right|
\end{split}
\end{equation}
with 
$$
T(l_1,l_2,g_1,g_2):=\sum_{w\in \mathbb{Z}} \Phi_{p^{l_1}g_1,p^{l_2}g_2}\left(\frac{w-C}{X}\right)e\left(\frac{g_1g_2(F''(w)+p^kG_{p^{l_1}g_1,p^{l_2}g_2}(w))}{p^{n-(k_1+k_2+l_1+l_2)}}\right).
$$ 
Next we apply the Poisson summation formula to transform the sums $T(l_1,g_1)$ and $T(l_1,l_2,g_1,g_2)$. 

\section{Application of Poisson summation}
In the following, we set 
\begin{equation} \label{f1def}
f_1(w):=g_1\left(F'(w)+p^{k_1}G_{p^{l_1}g_1}(w)\right), \quad \Omega_1:= \Phi_{p^{l_1}g_1}, \quad s_1:=n-(k_1+l_1).
\end{equation}
We write
\begin{equation} \label{T1}
\begin{split}
T(l_1,g_1)= & \sum_{w\in \mathbb{Z}} \Omega_1\left(\frac{w-C}{X}\right)e\left(\frac{f_1(w)}{p^{s_1}}\right)\\
= & \sum\limits_{r \bmod{p^{s_1}}} e\left(\frac{f_1(r)}{p^{s_1}}\right)\sum\limits_{t_1\equiv r\bmod{p^{s_1}}} \Omega_1\left(\frac{t_1-C}{X}\right).
\end{split}
\end{equation}
Now the Poisson summation formula, Proposition \ref{Poisson summation}, gives
\begin{equation*}
\begin{split}
\sum\limits_{t_1\equiv r\bmod{p^{s_1}}} \Omega_1\left(\frac{t_1-C}{X}\right)
= & \frac{X}{p^{s_1}} \sum\limits_{t_1\in \mathbb{Z}} \hat{\Omega}_1\left(\frac{t_1X}{p^{s_1}}\right) e\left(-\frac{t_1C}{p^{s_1}}\right) e\left(\frac{t_1r}{p^{s_1}}\right).
\end{split}
\end{equation*}
Plugging this into \eqref{T1} and interchanging summations, we get
\begin{equation} \label{TT1}
T(l_1,g_1)= \frac{X}{p^{s_1}} \sum_{t_1\in \mathbb{Z}} \hat{\Omega}_1\left(\frac{t_1X}{p^{s_1}}\right) e\left(-\frac{t_1C}{p^{s_1}}\right)
\sum\limits_{r \bmod{p^{s_1}}} e\left(\frac{f_1(r)+t_1r}{p^{s_1}}\right).
\end{equation}
Similarly, we obtain
\begin{equation} \label{TT2}
T(l_1,l_2,g_1,g_2)= \frac{X}{p^{s_2}} \sum_{t_2\in \mathbb{Z}} \hat{\Omega}_2\left(\frac{t_2X}{p^{s_2}}\right) e\left(-\frac{t_2C}{p^{s_2}}\right)\sum\limits_{r \bmod{p^{s_2}}} e\left(\frac{f_2(r)+t_2r}{p^{s_2}}\right),
\end{equation}
where 
\begin{equation} \label{f2def}
f_2(w):=g_1g_2(F''(w)+p^kG_{p^{l_1}g_1,p^{l_2}g_2}(w)), \quad \Omega_2:= \Phi_{p^{l_1}g_1,p^{l_2}g_2}, \quad s_2:=n-(k_1+k_2+l_1+l_2).
\end{equation}
Again, we point out that $f_1(w)$ and $f_2(w)$ define rational functions over $\mathbb{Z}_p/p^n\mathbb{Z}_p$ and hence over $\mathbb{Z}_p/p^{s_1}\mathbb{Z}_p$ and $\mathbb{Z}_p/p^{s_2}\mathbb{Z}_p$, respectively. 

\section{Evaluation of complete exponential sums}
Next, we shall apply Proposition \ref{exp sum estimates} to estimate the complete exponential sums over $r \bmod{p^{s_i}}$, $i=1,2$ in \eqref{TT1} and \eqref{TT2}.
Recall the definitions of the functions $g$, $F$, $f_1$ and $f_2$ in \eqref{gdef}, \eqref{Fdef}, \eqref{f1def} and \eqref{f2def}. We calculate that 
$$
F'(w)=\frac{a_0pg'(w)}{u+pg(w)}=\frac{2a_0p(v+pw)}{u+2v pw+p^2w^2}
\equiv \frac{2a_0p(v+pw)}{\beta+(v+pw)^2} \bmod{p^n}.
$$
For $i=1,2$, we set
\begin{equation} \label{Fidefi}
F_i(r):=f_i(r)+t_ir, 
\end{equation}
which define rational functions over $\mathbb{Z}_p/p^{n}\mathbb{Z}_p$.
Recall the notions introduced before the statement of Proposition \ref{exp sum estimates}. To apply this proposition, we need to determine the multiplicities of possible critical points of $F_i(r)$. We have 
$$
F_1'(r)=f_1'(r)+t_1=g_1\left(F''(r)+p^{k_1}\tilde{G}_1'(r)\right)+t_1
$$
and 
$$
F_2'(r)=f_2'(r)+t_2=g_1g_2\left(F'''(r)+p^k\tilde{G}_2'(r)\right)+t_2,
$$
where we set $\tilde{G}_1:=G_{p^{l_1}g_1}$ and $\tilde{G}_2:=G_{p^{l_1}g_1,p^{l_2}g_2}$. Set 
$$
T(r):=v+pr.
$$
Then the second and third derivatives of $F$ are calculated as 
$$
F''(r)\equiv 2a_0p^2\cdot \frac{\beta-T(r)^2}{\left(\beta+T(r)^2\right)^2}\bmod{p^n}
$$
and 
$$
F'''(r)\equiv -4a_0p^3\cdot \frac{T(r)\left(3\beta-T(r)^2\right)}{\left(\beta+T(r)^2\right)^3}\bmod{p^n}.
$$
It follows that
$$
F_1'(r)\equiv g_1\left(2a_0p^2\cdot \frac{\beta-T(r)^2}{\left(\beta+T(r)^2\right)^2}+p^{k_1}\tilde{G}_1'(r)\right)+t_1
\bmod{p^n}
$$
and 
$$
F_2'(r)\equiv g_1g_2\left(-4a_0p^3\cdot \frac{T(r)\left(3\beta-T(r)^2\right)}{\left(\beta+T(r)^2\right)^3}+p^k\tilde{G}_2'(r)\right)+t_2\bmod{p^n}.
$$
For $i=1,2$, let $\omega_i:=\mbox{ord}_p(F_i')$ and suppose that the multiplicities of the critical points of $F_i$ are bounded by $j_i-1$.  In the last subsection, we will see that $\omega_1\le 5$ if $k_1\ge 6$ and $\omega_2\le 8$ if $k\ge 9$. 
Now it follows from Proposition \ref{exp sum estimates}, \eqref{TT1}, \eqref{TT2} and \eqref{Fidefi} that 
\begin{equation} \label{TT1esti}
T(l_1,g_1)\ll \frac{X}{p^{s_1}} \sum_{t_1\in \mathbb{Z}} \hat{\Omega}_1\left(\frac{t_1X}{p^{s_1}}\right) p^{\omega_1/j_1}p^{(1-1/j_1)s_1} 
\ll \left(\frac{X}{p^{s_1/j_1}}+p^{(1-1/j_1)s_1}\right)
\end{equation}
and 
\begin{equation} \label{TT2esti}
T(l_1,l_2,g_1,g_2)= \frac{X}{p^{s_2}} \sum_{t_2\in \mathbb{Z}} \hat{\Omega}_2\left(\frac{t_2X}{p^{s_2}}\right) p^{\omega_2/j_2}p^{(1-1/j_2)s_2}
\ll \left(\frac{X}{p^{s_2/j_2}}+p^{(1-1/j_2)s_2}\right).
\end{equation}
Under the above-mentioned conditions $k_1\ge 6$ and $k\ge 9$, respectively, we will determine $j_1$ and $j_2$ in the last section as well.  The next section provides a proof of Theorem \ref{mainresult} assuming the validity of the estimates \eqref{TT1esti} and \eqref{TT2esti} above.  
  
\section{Final bounds}
Recalling that $s_1=n-(k_1+l_1)$ and $H_1=p^{k_1}\le X$ and combining \eqref{Weyl1} and \eqref{TT1esti}, we obtain
 \begin{equation} \label{Sigma1esti} 
\begin{split}
|\Sigma|^2\ll & XH_1+H_1\sum_{l_1\ge 0} \sum_{\substack{0< |g_1|<2X/(H_1p^{l_1})\\ (g_1,p)=1}} 
 \left(\frac{X}{p^{n/j_1}}\cdot p^{(k_1+l_1)/j_1}+p^{(1-1/j_1)(n-(k_1+l_1))}\right)\\
\ll & XH_1+\frac{X^2H_1^{1/j_1}}{q^{1/j_1}}+\frac{Xq^{1-1/j_1}}{H_1^{1-1/j_1}},
\end{split}
\end{equation}
where we set 
$$
q:=p^{n}.
$$
If $X\gg q^{(j_1-1)/(2j_1-1)}$, we may choose $H_1$ in such a way that 
$$
q^{(j_1-1)/(2j_1-1)}\le H_1 < pq^{(j_1-1)/(2j_1-1)},
$$
balancing the first and third terms in the last line of \eqref{Sigma1esti}. We note that the condition $k_1\ge 6$ is satisfied if $n>n_0$ for some large enough absolute constant $n_0>0$. Under the above choice of $H_1$, we obtain
$$
\Sigma\ll X^{1/2}q^{(j_1-1)/(2(2j_1-1))}+Xq^{-j_1/(4(2j_1-1))}.
$$ 
Noting \eqref{aimsum}, \eqref{aimsumtrans}, \eqref{aimsumnew}, \eqref{splits}, \eqref{Sigmasum} and $X=N/p$, this implies the bound 
\begin{equation*} 
S_Q^1(\Psi,\Phi,A,B,M,N;\chi)\ll MN^{1/2}q^{(j_1-1)/(2(2j_1-1))}+MNq^{-j_1/(4(2j_1-1))}
\end{equation*}
if $N\ge q^{(j_1-1)/(2j_1-1)}$. We observe that if $N\le q^{(3j_1-2)/(2(2j_1-1))}$, then the first term on the right-hand side dominates. Hence, we have 
\begin{equation} \label{final1}
S_Q^1(\Psi,\Phi,A,B,M,N;\chi)\ll MN^{1/2}q^{(j_1-1)/(2(2j_1-1))} \quad \mbox{if } q^{(j_1-1)/(2j_1-1)}\le N\le q^{(3j_1-2)/(2(2j_1-1))}.
\end{equation}
Similarly, we obtain 
\begin{equation} \label{final11}
S_Q^2(\Psi,\Phi,A,B,M,N;\chi)\ll NM^{1/2}q^{(j_1-1)/(2(2j_1-1))} \quad \mbox{if } q^{(j_1-1)/(2j_1-1)}\le M\le q^{(3j_1-2)/(2(2j_1-1))}.
\end{equation}

Recalling that $s_2=n-(k_1+k_2+l_1+l_2)$, $H_1=p^{k_1}\le X$ and $H_2=p^{k_2}\le X$, and combining \eqref{Weyl2} and \eqref{TT2esti}, we obtain
\begin{equation} \label{Sigma2esti}
\begin{split}
|\Sigma|^4\ll & X^2H_1^2+X^3H_2+ XH_1H_2\times\\ & \sum_{l_1\geq 0}\sum_{l_2\geq 0}
\sum\limits_{\substack{0<|g_1|< 2X/(H_1p^{l_1})\\(g_1,p)=1}} \sum\limits_{\substack{0<|g_2|< 2X/(H_2p^{l_2})\\(g_2,p)=1}} \left(\frac{X}{p^{n/j_2}}\cdot p^{(k_1+k_2+l_1+l_2)/j_2}+p^{(1-1/j_2)(n-(k_1+k_2+l_1+l_2))}\right)\\
\ll &X^2H_1^2+X^3H_2+ \frac{X^4(H_1H_2)^{1/j_2}}{q^{1/j_2}}+\frac{X^3q^{(1-1/j_2)}}{(H_1H_2)^{1-1/j_2}}. 
\end{split}
\end{equation}
If $X\gg q^{(j_2-1)/(3j_2-2)}$, we may choose $H_1$ and $H_2$ in such a way that 
$$
X^{(2j_2-1)/(5j_2-3)}q^{(j_2-1)/(5j_2-3)}\le H_1<pX^{(2j_2-1)/(5j_2-3)}q^{(j_2-1)/(5j_2-3)}
$$
and 
$$
X^{-(j_2-1)/(5j_2-3)}q^{2(j_2-1)/(5j_2-3)}\le H_2<pX^{-(j_2-1)/(5j_2-3)}q^{2(j_2-1)/(5j_2-3)},
$$ 
balancing the first, second and fourth terms in the last line of \eqref{Sigma2esti}. We note that the condition $k\ge 9$ is satisfied if $n>n_0'$ for some large enough absolute constant $n_0'>0$. Under the above choice of $H_1$ and $H_2$, we obtain
$$
\Sigma \ll X^{(7j_2-4)/(2(5j_2-3))}q^{(j_2-1)/(2(5j_2-3))}+X^{(20j_2-11)/(4(5j_2-3))}q^{-j_2/(2(5j_2-3))}.
$$
This implies the bound 
\begin{equation*} 
S_Q^1(\Psi,\Phi,A,B,M,N;\chi)\ll MN^{(7j_2-4)/(2(5j_2-3))}q^{(j_2-1)/(2(5j_2-3))}+MN^{(20j_2-11)/(4(5j_2-3))}q^{-j_2/(2(5j_2-3))}
\end{equation*}
if $N\ge q^{(j_2-1)/(3j_2-2)}$. We observe that if $N\le q^{2/3}$, then the first term on the right-hand side dominates. Hence, we have 
\begin{equation} \label{final2} 
S_Q^1(\Psi,\Phi,A,B,M,N;\chi)\ll MN^{(7j_2-4)/(2(5j_2-3))}q^{(j_2-1)/(2(5j_2-3))} \quad \mbox{if } q^{(j_2-1)/(3j_2-2)}\le N\le q^{2/3}.
\end{equation}
Similarly, we obtain
\begin{equation} \label{final22} 
S_Q^2(\Psi,\Phi,A,B,M,N;\chi)\ll NM^{(7j_2-4)/(2(5j_2-3))}q^{(j_2-1)/(2(5j_2-3))} \quad \mbox{if } q^{(j_2-1)/(3j_2-2)}\le M\le q^{2/3}.
\end{equation}
This completes the proof of Theorem \ref{mainresult}.  

\section{Analysis of orders and multiplicities } \label{j1j2} We recall that  
$$
F_1'(r)\equiv R_1(r)+g_1p^{k_1}\tilde{G}_1'(r)\bmod{p^n}
$$
and 
$$
F_2'(r)\equiv R_2(r)+g_1g_2p^k\tilde{G}_2'(r)\bmod{p^n},
$$
where 
$$
R_1(r):=2g_1a_0p^2\cdot \frac{\beta-T(r)^2}{\left(\beta+T(r)^2\right)^2}+t_1,
$$
$$
R_2(r):=-4g_1g_2a_0p^3\cdot \frac{T(r)\left(3\beta-T(r)^2\right)}{\left(\beta+T(r)^2\right)^3}+t_2
$$
with
$$
T(r)=v+pr.
$$ 
We also recall that $p>3$ and   
$$
(a_0g_1g_2uv,p)=1, \quad \mbox{where } u\equiv \beta+v^2 \bmod p^n. 
$$
For $i=1,2$, we aim to obtain upper bounds for 
$$
\omega_i=\mbox{ord}_p(F_i')
$$
and the maximum $m_i$ of the multiplicities of critical points of $F_i$. We shall establish below that $\mbox{ord}_p(R_1)\le 5$ and $\mbox{ord}_2(R_2)\le 8$. Hence, if $k_1\ge 6$ then $\omega_1=$ord$_p(R_1)$, and if $k\ge 9$ then $\omega_2=\mbox{ord}_p(R_2)$. Moreover, in these situations, $m_i$ equals the maximum of the multiplicities of roots of $R_i$ for $i=1,2$. Thus we may discard the higher-order terms $g_1p^{k_1}\tilde{G}_1'(r)$ and $g_1g_2p^k\tilde{G}_2'(r)$ and are left with investigating the rational functions $R_1(r)$ and $R_2(r)$. (In fact, a more careful analysis of the functions $\tilde{G}_1$ and $\tilde{G}_2$ reveals that the conditions $k_1\ge 1$ and $k\ge 1$ suffice to get rid of the terms $g_1p^{k_1}\tilde{G}_1'(r)$ and $g_1g_2p^k\tilde{G}_2'(r)$, but this does not change our main result.) 

Starting with $R_1(r)$, we calculate that  
$$
R_1(r)=\frac{P_1(r)}{Q_1(r)},
$$
where 
$$
Q_1(r):=(\beta+(v+pr)^2)^2
$$
and 
$$
P_1(r)=c_0+c_1r+c_2r^2+c_3r^3+c_4r^4
$$
with
\begin{equation*}
\begin{split}
c_0:= & 2p^2a_0g_1(\beta-v^2)+t_1(\beta+v^2)^2=2p^2a_0g_1(\beta-v^2)+t_1u^2\\
c_1:= & 4p(-p^2a_0g_1v+\beta t_1 v+t_1v^3)=4p(-p^2a_0g_1v+uvt_1)\\
c_2:= & 2p^2(-p^2a_0g_1+\beta t_1+ 3t_1v^2)\\
c_3:= & 4p^3t_1v\\
c_4:= & p^4t_1.
\end{split}
\end{equation*}
Now we consider the following cases, assuming that $k_1\ge 6$.  \\ \\
{\bf Case 1.1:} {ord$_p(t_1)\le 1$}. In this case, we have 
$$
\mbox{ord}_p(c_1)= 1+\mbox{ord}_p(t_1)<\mbox{ord}_p(c_i) \quad \mbox{for } i=2,3,4.
$$
It follows that $\omega_1\le 2$,
and the reduced polynomial $P_1(r)/p^{\omega_1}$ is at most linear modulo $p$. Hence, $m_1\le 1$.\\ \\ 
{\bf Case 1.2:} {ord$_p(t_1)\ge 3$.} 
In this case, we have 
$$
\mbox{ord}_p(c_1)=3<\mbox{ord}_p(c_i) \quad \mbox{for } i=2,3,4.
$$ 
It follows that $\omega_1\le 3$,
and again, the reduced polynomial $P_1(r)/p^{\omega_1}$ is at most linear modulo $p$. Hence, $m_1\le 1$.\\ \\
{\bf Case 1.3:} {$\mbox{ord}_p(t_1)=2$.} In this case, we have
$$
\mbox{ord}_p(c_3)=5<6=\mbox{ord}_p(c_4)
$$ 
and hence, the reduced polynomial $P_1(r)/p^{\omega_1}$ is at most cubic modulo $p$. Below we prove by contradiction that it is in fact at most quadratic modulo $p$. 
Suppose it is cubic modulo $p$. Then we have 
$$
\mbox{ord}_p(c_i)\ge \mbox{ord}_p(c_3)=5 \quad \mbox{for } i=0,1,2
$$ 
and it follows that
\begin{equation*}
\begin{cases}
  \mbox{ord}_p(c_2/p^4)=\mbox{ord}_p(-a_0g_1+\beta t_1'+3t_1'v^2)\geq  1 \\
  \mbox{ord}_p(c_1/p^3)=\mbox{ord}_p(-a_0g_1v+\beta t_1'v)\geq  2\\
  \mbox{ord}_p(c_0/p^2)=\mbox{ord}_p(2a_0g_1(\beta -v^2)+t_1' (\beta+v^2)^2)\geq  3,
\end{cases}
\end{equation*}
where $t_1':=t_1/p^2$. Multiplying the term $-a_0g_1+\beta t_1'+3t_1'v^2$ in the first line above by $v$ and subtracting the term $-a_0g_1v+\beta t_1'v$ in the second line, we deduce that
$$
\mbox{ord}_p(3t_1'v^3)\geq 1.
$$
This implies that $\mbox{ord}_p(v)\geq 1$, which is a contradiction to the coprimality of $v$ and $p$. We conclude that the reduced polynomial $P_1(r)/p^{\omega_1}$ is at most quadratic modulo $p$, as claimed. Hence, $m_1\le 2$. Moreover, $\omega_1\le \mbox{ord}_p(c_3)=5$.\\ \\
So, in all three cases above, we have
$$
m_1\le 2 \quad \mbox{and} \quad  
\omega_1\le 5.
$$
Since $m_1\le 2$, we can take $j_1=2+1=3$ in \eqref{TT1esti}.\\

Turning to $R_2(r)$, we calculate that  
$$
R_2(r)=\frac{P_2(r)}{Q_2(r)},
$$
where 
$$
Q_2(r):=\left(\beta+(v+pr)^2\right)^3
$$
and 
$$
P_2(r)=d_0+d_1r+d_2r^2+d_3r^3+d_4r^4+d_5r^5+d_6r^6
$$
with
\begin{equation}\label{coeq}
\begin{split}
    d_0:=& 4p^3a_0g_1g_2v(v^2-3\beta)-t_2(\beta+v^2)^3=a_0g_1g_2p^3v(v^2-3\beta)-t_2u^3\\
    d_1:=& 6p(\beta^2t_2v+2\beta t_2 v^3+t_2v^5+2p^3a_0g_1g_2(v^2-\beta))\\
    d_2:=& 3p^2(\beta^2t_2+6\beta t_2v^2+5t_2v^4+4p^3a_0g_1g_2v)\\
    d_3:=& 4p^3(p^3a_0g_1g_2+3\beta t_2v+5t_2v^3)\\
    d_4:=& 3p^4(5v^2+\beta)t_2\\
    d_5:=& 6p^5t_2v\\
    d_6:=& p^6t_2.
\end{split}
\end{equation}
Again, we consider three cases below, assuming that $k\ge 9$.\\ \\
{\bf Case 2.1:} {$\mbox{ord}_p(t_2)\le 2$.}
In this case, we have
$$\mbox{ord}_p(d_0)=\mbox{ord}_p(t_2)<\mbox{ord}_p(d_i) \quad \mbox{for } i=1,...,6
$$
It follows that $\omega_2\le 2$ and the reduced polynomial $P_2(r)/p^{\omega_2}$ is constant but non-zero modulo $p$. Hence, there is no critical point. \\ \\
{\bf Case 2.2:} {$\mbox{ord}_p(t_2)\ge 4$.} In this case, we have 
$$
\mbox{ord}_p(d_2)=5<\mbox{ord}_p(d_i) \quad \mbox{for } i=3,...,6. 
$$
It follows that $\omega_2\le 5$ and the reduced polynomial $P_2(r)/p^{\omega_2}$ in at most quadratic. Hence, $m_2\le 2$.  \\ \\
{\bf Case 2.3:} {$\mbox{ord}_p(t_2)=3$.} In this case, we have 
$$
\mbox{ord}_p(d_5)=8<9=\mbox{ord}_p(d_6)
$$ 
and hence, the reduced polynomial $P_2(r)/p^{\omega_2}$ is at most quartic or pentic modulo $p$. Below we prove by contradiction that it is in fact at most cubic modulo $p$. 
Suppose it is quartic or pentic modulo $p$. Then we have 
$$
\mbox{ord}_p(d_i)\ge \min\{\mbox{ord}_p(d_4),\mbox{ord}_p(d_5)\}\ge 7 \quad \mbox{for } i=0,1,2,3
$$ 
and it follows that
\begin{equation}\label{cseq}
\begin{cases}
   \mbox{ord}_p(d_3/p^6)=\mbox{ord}_p(a_0g_1g_2+3\beta t_2'v+5t_2'v^3)\geq 1\\
   \mbox{ord}_p(d_2/p^5)=\mbox{ord}_p(\beta^2t_2'+6\beta t_2'v^2+5t_2'v^4+4a_0g_1g_2v)\geq 2\\
   \mbox{ord}_p(d_1/p^4)=\mbox{ord}_p(\beta^2t_2'v+2\beta t_2' v^3+t_2'v^5+2a_0g_1g_2(v^2-\beta))\geq 3\\
   \mbox{ord}_p(d_0/p^3)=\mbox{ord}_p(4a_0g_1g_2v(v^2-3\beta)-t_2'u^3)\geq 4,
\end{cases}
\end{equation}
where $t_2':=t_2/p^3$.
In particular, this implies 
\begin{equation}\label{nwweq}
\begin{cases}
a_0g_1g_2+3\beta t_2' v+5t_2' v^3\equiv 0 \bmod{p}\\
\beta^2t_2'+6\beta t_2'v^2+5t_2'v^4+4a_0g_1g_2 v\equiv 0\bmod{p}\\
\beta^2t_2'v +2\beta t_2' v^3+t_2'v^5+2a_0g_1g_2(v^2-\beta)\equiv 0 \bmod{p}\\
4a_0g_1g_2v^3-12a_0g_1g_2\beta v-t_2'u^3\equiv 0 \bmod{p}.
\end{cases}
\end{equation}
From the first congruence above, we infer that
\begin{equation}\label{eva}
a_0g_1g_2\equiv -3\beta t_2' v-5t_2' v^3\bmod{p}.
\end{equation}
Substituting the right-hand side for the term $a_0g_1g_2$ in the second and third equations in \eqref{nwweq} and dividing by $t_2'$, we get
\begin{equation}\label{one}
\beta^2-6\beta v^2- 15 v^4 \equiv 0 \bmod{p}
\end{equation}
and
\begin{equation*}
    \begin{split}
  7\beta^2+6\beta v^2-9v^4+6\beta^2\equiv 0 \bmod{p}.
 \end{split}
\end{equation*}
Subtracting 7 times the first congruence above from the second one and dividing by 6, we obtain
\begin{equation*}
(\beta+4v^2)^2\equiv 0 \bmod{p}
\end{equation*}
which implies
\begin{equation*}
\beta\equiv -4v^2 \bmod{p}.
\end{equation*}
Now substituting $-4v^2$ for $\beta$ in \eqref{one} gives
$$
\beta^2-6\beta v^2-15v^4\equiv 25v^2\equiv 0 \bmod{p}.
$$
This contradicts the coprimality of $v$ and $p$. We conclude that the reduced polynomial $P_2(r)/p^{\omega_2}$ is at most cubic modulo $p$, as claimed. Hence, $m_2\le 3$. Moreover, $\omega_2\le \mbox{ord}_p(c_5)=8$.\\ \\
So, in all three cases above, we have 
$$
m_2\le 3 \quad \mbox{and} \quad  \omega_2\le 8.
$$
Since $m_2\le 3$, we can take $j_2=3+1=4$ in \eqref{TT2esti}.\\ 

Below we will show that under a certain congruence condition on $p$, we can reduce our bound for the degree of $P_2(r)/p^{\omega_2}$ modulo $p$ further.  In cases 2.1 and 2.2 above, we found that this reduced polynomial $P_2(r)/p^{\omega_2}$ is at most quadratic modulo $p$, so it suffices to look at case 2.3 in which $\mbox{ord}_p(t)=3$. Suppose, $P_2(r)/p^{\omega_2}$ is cubic modulo $p$. Then
$$
\mbox{ord}_p(d_i)\ge \mbox{ord}_p(d_3)\ge 6 \quad \mbox{for } i=0,1,2
$$ 
and it follows that
\begin{equation*}
\begin{cases}
  \mbox{ord}_p(d_2/p^5)= \mbox{ord}_p(\beta^2t_2'+6\beta t_2'v^2+5t_2'v^4+4a_0g_1g_2v)\geq 1\\
  \mbox{ord}_p(d_1/p^4)= \mbox{ord}_p(\beta^2t_2'v+2\beta t_2' v^3+t_2'v^5+2a_0g_1g_2(v^2-\beta))\geq 2\\
  \mbox{ord}_p(d_0/p^3)= \mbox{ord}_p(4a_0g_1g_2v(v^2-3\beta)-t_2'u^3)\geq 3,
\end{cases}
\end{equation*}
where again $t_2':=t/p^3$. In particular, this implies 
\begin{equation}\label{enw}
\begin{cases}
\beta^2t_2'+6\beta t_2' v^2+5t_2'v^4+4a_0g_1g_2 v\equiv 0\bmod{p}\\
\beta^2t_2'v +2\beta t_2' v^3+t_2'v^5+2a_0g_1g_2(v^2-\beta)\equiv 0 \bmod{p}\\
4a_0g_1g_2v(v_2-3\beta)-t_2'u^3\equiv 0 \bmod{p}.\\
\end{cases}
\end{equation}
Multiplying the first congruence above by $v$, subtracting the second, and dividing by $2u=2(\beta+v^2)$, we obtain
\begin{equation*}
    a_0g_1g_2\equiv -2t_2'v^3 \bmod{p}.
\end{equation*}
Substituting the right-hand side for $a_0g_1g_2$ in the first congruence in \eqref{enw}, and dividing by $t_2'$, we get
\begin{equation*}
 \beta^2+6\beta v^2 -3v^4 \equiv 0 \bmod{p},
\end{equation*}
which implies 
$$
\left(\frac{\beta+3v^2}{2v^2}\right)^2\equiv 3 \mod{p}.
$$
Hence, $3$ is a quadratic residue modulo $p$. So if 
$$
\left(\frac{3}{p}\right)=-1, 
$$
then $P_2(r)/p^{\omega_2}$ is at most quadratic modulo $p$. Using quadratic reciprocity, it is easy to see that this happens if and only if $p\equiv \pm 5 \bmod{12}$. Hence, in this case, we deduce that $m_2\le 2$ and we can therefore take $j_2=2+1=3$ in \eqref{TT2esti}.

\end{document}